\def\R{\hbox{\bf R}}
\def\N{\hbox{\bf N}}
\def\p{\partial}
\def\<{\langle}
\def\>{\rangle}
\newcommand{\ba}{\begin{eqnarray}}
\newcommand{\ea}{\end{eqnarray}}
\newtheorem{thm}{Theorem}[section]
\newtheorem{theorem}[thm]{Theorem}
\newtheorem{definition}[thm]{Definition}
\newtheorem{lemma}[thm]{Lemma}
\newtheorem{proposition}[thm]{Proposition}
\newtheorem{corollary}[thm]{Corollary}
\newtheorem{rem}[thm]{Remark}
\newcommand{\eps}{\epsilon}
\numberwithin{equation}{section}
\renewcommand{\R}{{\mathbb R}}
\renewcommand{\N}{{\mathbb N}}
\renewcommand{\p}{\partial}
\begin{document}

\title {SCALAR REDUCTION TECHNIQUES FOR WEAKLY COUPLED HAMILTON--JACOBI SYSTEMS.}
\author{A.Siconolfi\thanks{Mathematics Department, University of Rome "La
Sapienza",  Italy, {\tt siconolf@mat.uniroma1.it}}, \,
S.Zabad\thanks{Mathematics Department, University of Rome "La
Sapienza",  Italy, {\tt zabad@mat.uniroma1.it}}}

\maketitle

\begin{abstract}
We study a class of weakly coupled systems of Hamilton--Jacobi
equations at the critical level.  We associate to it a family of
scalar discounted equation. Using control--theoretic techniques  we
construct an algorithm which allows obtaining a critical solution to
the system as limit of a monotonic sequence of subsolutions.  We
moreover  get a characterization of isolated points of the Aubry set
and  establish semiconcavity properties for critical subsolutions.

\bigskip

\noindent \textbf{Key words. }weakly coupled systems, Hamilton-Jacobi equations, viscosity solutions, Aubry set, optimal control, critical
value.
\bigskip

\noindent \textbf{AMS subject classifications.} 35F21, 49L25, 37J50.
\end{abstract}

\section{Introduction}
\parskip +3pt
 This paper deals with  weakly coupled systems of Hamilton-Jacobi equations,  on the torus $\mathbb T^N$, of the form
 \[ H_{i}(x,Du_{i})+\sum_{j=1}^{m}a_{ij}u_{j}(x)=\alpha
 \qquad \mbox{ for every } i \in \{1,\cdots,m\},\]
where $\alpha$ is a real constant and $H_{1}, \cdots,...,H_{m}$ are
continuous Hamiltonians, convex and coercive in the momentum
variable, and $A:=(a_{ij})$ is an $m\times m$ coupling matrix
satisfying
 $$a_{ij}\leq 0 \mbox{ for every }i\neq j, \quad  \sum \limits_{j=1}^{m}a_{ij} = 0 \mbox { for any } i \in \{1,2,...,m\} .$$

 \smallskip

 This is the so--called degenerate case,  because of the
vanishing condition on  any row  of $A$.  It corresponds, for a
single equation, to require it being  of Eikonal type, namely
independent of the unknown function.

\smallskip

 We focus on   the critical system,  obtained taking as
 $\alpha$ the minimum value for which the systems of the above family  admit
viscosity  subsolutions. As first pointed out in \cite{camilli} and
\cite{mitake},  it is the unique system of the family  for which
there are   viscosity solutions on the whole torus.

 The critical setting has been deeply investigated  through PDE techniques in \cite{Davini},  with the purpose  of building an adapted Weak KAM
 theory. It has been in particular proved the existence of an Aubry set,
  denoted in what follows by $\mathcal A$, enjoying   properties similar  to the corresponding object
 in the scalar case. Recently the results of \cite{Davini}
  have been recovered and improved in  \cite{MSTY},
 \cite{ISZ}  by means of a more geometric approach
  to the matter  in a suitable probabilistic frame. In \cite{DSZ}
  the same method has been applied to a time--dependent version of
  the system with the aim of studying an associated
  Lax--Oleinik semigroup.

  There are still several issues to be investigated  and understood
  in the field, especially about the structure and the property of the Aubry set. The major open question being to get regularity
  results for critical subsolutions  on  $\mathcal A$, as in the case of a single equation.

The paper is centered on a method, which seems new, to tackle this
kind of problems:  namely the scalar reduction technique mentioned
in the title. It simply consists in
 associating to the system a family of scalar discounted
 equations. These are roughly speaking obtained by picking one of the   equations in the system, and
freezing all the components of a given critical subsolution except
 the one corresponding to the index of the selected equation. It becomes the unknown of the discounted equation.
 This approach advantageously  allows  exploiting the wide knowledge of
 this kind of
 equations to gather information on the system. We  in particular use that a comparison principle holds for discounted equations, the solutions can be represented as infima of integral functionals, and the  fact that  corresponding optimal
 trajectories do exist.

Our achievements are as follows:   we provide a constructive
algorithm for getting a critical solution by suitably modifying an
initial critical subsolution  outside the Aubry set. The solution is
 obtained as uniform limit of a monotonic  sequence of
subsolutions. The procedure can be useful for numerical
approximation of a critical solution and of the Aubry set as well,
see Remark \ref{important} for more details.

 We moreover give a characterization of isolated points of the
 Aubry, adapting the notion of equilibrium point to systems. This enables us to also show
  the strict differentiability of any critical subsolution on such  points. The final outcome is about   a semiconcavity property for  critical
 subsolutions on the Aubry set, and  on the whole torus for
 solutions. We more precisely prove that the
 superdifferential is nonempty. These results are clearly  related to the aforementioned  open problem
about the differentiability of critical subsolutions
 on the Aubry set.  They can viewed as a partial positive answer to the regularity issue.  We hope they will be useful to fully  crack the
 problem.

 The paper is organized as follows:  Section \ref{notations} contains the notations, some preliminary results and  the setting.
 In Section \ref{control} we introduce the family of scalar discounted equations associated to the critical system,
 and describe an algorithm to get a critical solution starting from
  any subsolution. In Section \ref{applica} we    study, through  the scalar reduction technique,   the nature of isolated points of $\mathcal A$
 and  prove semiconcavity properties for critical subsolutions. Finally, the appendix is
  devoted to recall some results for  critical equations in the scalar case.
 \section{Preliminaries} \label{notations}
 \subsection{Notations  and basic results}

 Throughout the paper, we denote by $\mathbb R^N$ (resp. $\R^m$), $\mathbb{T}^{N}$ the $N$-dimensional Euclidean space and flat torus, respectively, where
 $N$ (resp. $m$) is a positive  integer number.
 We further indicate with $\mathbb R^m_+$ the set made up by the vectors of $\mathbb R^m$
 with nonnegative components.
 We denote by bold characters vectors in $\R^m$ and functions taking values in $\R^m$.  The vector of $\mathbb{R}^{m}$  with  all
 the components equal to $1$ is accordingly denoted by
  $\mathbf {1}$.
  The partial order relations between elements of $\mathbb{R}^{m}$,
denoted by $\leq$,  $<$ must be understood componentwise.
Accordingly, we write $\mathbf u \leq \mathbf v$ (resp. $\mathbf u <
\mathbf v$), for given functions $\mathbf u, \mathbf v
:\mathbb{T}^{N}\rightarrow \mathbb{R}^{m}$, if $\mathbf u(x) \leq
\mathbf v(x)$ (resp. $\mathbf u(x) < \mathbf v(x)$)  for every $x
\in \mathbb{T}^{N} $.

\smallskip

 Given an upper semicontinuous (respectively, lower semicontinuous) function $f$ on $\mathbb{T}^{N}$ or on $\R^N$, we say that a continuous  function $\phi$ is supertangent
 (res. subtangent) to $f$ at some point $x_{0}$ if $\phi(x_{0})=f(x_{0})$ and $\phi \geq
 f$ (resp. $\phi \leq f$)
 in some neighborhood  of $x_{0}$. The set made up by  the differentials of $C^1$ functions supertangent (resp. subtangent) to $f$ at $x_0$ is called
 the  superdifferential (resp. subdifferential) of $f$ at $x_0$, and
 is indicated with
$D^{+}f(x_0)$ (resp. $D^{-}f(x_0)$).

If $f$  is locally Lipschitz continuous function then it is almost
everywhere differentiable with locally bounded gradient,
 in force of Rademacher's Theorem. The (Clarke) generalized gradient of $f$, denoted by $\partial{f}$, is defined for every $x$ via the formula
$$\partial{f}(x)=\mathrm {co}\{p=\lim_{n}D f(x_{n}), f \mbox{ is differentiable at }x_{n}, \lim_{n}x_{n}=x\},$$
where $\mathrm {co}(\cdot)$ denotes the convex hull. The function
$f$ is called strictly differentiable at  $x$ if $f$ is
differentiable at $x$ and $D f$ is continuous at x. This is
equivalent to require $\partial f(x)$ to be a singleton, namely
$\partial f(x)=\{Df(x)\}$. We recall from \cite{DSZ}, Lemma 1.4, a
result on the a.e. derivative  of $f$ along an absolutely continuous
curve

\smallskip

\begin{lemma}\label{clarke} Let  $\eta: (-\infty,0]  \to \mathbb T^N$ be an  absolutely continuous curve. Let $s$ be such that $t \mapsto
f\big(\eta(t)\big)$ and $t \mapsto \eta (t)$ are both differentiable
at $s$. Then
\[ \frac d{d t} f(\eta(t))_{\mbox{\Large $|$}_{t=s}} =  p
\cdot \dot \eta(s) \quad\hbox{for some
 $p \in \partial f(\eta(s))$.}\]
\end{lemma}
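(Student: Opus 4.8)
The plan is to reduce the claim to the standard chain rule for the Clarke gradient, exploiting the local Lipschitz character of $f$ recalled just above. Working in a coordinate chart (lifting the curve to $\R^N$ in a neighborhood of $s$, which is legitimate since $\eta$ is continuous and differentiable at $s$), I set $x_0 = \eta(s)$, $v = \dot\eta(s)$ and $L = \frac{d}{dt}f(\eta(t))_{|_{t=s}}$, both of which exist by hypothesis.

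First I would use the differentiability of $\eta$ at $s$ to write $\eta(s+h) = x_0 + h\,v + o(h)$ as $h\to 0$. Since $f$ is Lipschitz near $x_0$ with some constant $\ell$, one has $|f(\eta(s+h)) - f(x_0 + h\,v)| \le \ell\,|o(h)| = o(h)$, so dividing by $h$ and passing to the limit yields
\[ L = \lim_{h\to 0}\frac{f(x_0 + h\,v) - f(x_0)}{h}.\]
Because this is a genuine two-sided limit, it simultaneously identifies the one-sided directional derivatives: $f'(x_0;v) = L$ (letting $h\downarrow 0$) and $f'(x_0;-v) = -L$ (letting $h\uparrow 0$ and relabelling).

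Next I would invoke the basic inequality relating the ordinary directional derivative to the Clarke generalized directional derivative $f^{\circ}(x_0;\cdot)$, namely $f'(x_0;w)\le f^{\circ}(x_0;w)$ whenever the left-hand side exists, together with the support-function identity $f^{\circ}(x_0;w) = \max\{p\cdot w : p\in\partial f(x_0)\}$. Applying this with $w=v$ gives $L \le \max_{p\in\partial f(x_0)} p\cdot v$, and applying it with $w=-v$ gives $-L \le \max_{p\in\partial f(x_0)} p\cdot(-v) = -\min_{p\in\partial f(x_0)} p\cdot v$, that is, $L \ge \min_{p\in\partial f(x_0)} p\cdot v$. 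Hence $L$ lies in the interval $\big[\min_p p\cdot v,\ \max_p p\cdot v\big]$.

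Finally, since $\partial f(x_0)$ is convex and compact, its linear image $\{p\cdot v : p\in\partial f(x_0)\}$ is precisely that closed interval; by the intermediate value property there is $p\in\partial f(x_0)$ with $p\cdot v = L$, which is the assertion. The only delicate point is the passage of the first step into the last: one must use the \emph{two-sided} differentiability of $t\mapsto f(\eta(t))$ to conclude that $L$ sits inside the symmetric Clarke interval, since a purely one-sided derivative would only deliver the upper bound $L\le f^{\circ}(x_0;v)$ and not membership in the interval.
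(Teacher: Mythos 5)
Your argument is correct, and it is the standard proof of this chain--rule statement; note that the paper itself states Lemma \ref{clarke} without proof, quoting it from \cite{DSZ}, so there is no in-paper argument to compare against. All the ingredients you use are sound: the Lipschitz estimate $|f(\eta(s+h))-f(x_0+hv)|=o(h)$ legitimately replaces the curve by its tangent line, the two-sided differentiability pins $L$ between $\min_{p\in\partial f(x_0)}p\cdot v$ and $\max_{p\in\partial f(x_0)}p\cdot v$ via $f'(x_0;\pm v)\le f^{\circ}(x_0;\pm v)$, and convexity plus compactness of $\partial f(x_0)$ (together with Clarke's theorem identifying the gradient-limit definition used in the paper with the subdifferential of $f^{\circ}(x_0;\cdot)$, which you implicitly invoke through the support-function identity) closes the argument. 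Two marginal remarks: (i) an equally common route is Lebourg's mean value theorem, writing $f(\eta(s+h))-f(\eta(s))=p_h\cdot(\eta(s+h)-\eta(s))$ with $p_h\in\partial f(\xi_h)$ and passing to the limit using upper semicontinuity of $\partial f$, which avoids the detour through $f^{\circ}$; (ii) your own caveat about two-sidedness is the reason the statement should be read at interior points $s<0$ (which suffices, since the lemma is only applied for a.e.\ $t$ in the open half-line).
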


\smallskip

We refer  to \cite{capuzzo} or \cite{Barles} for  the definition of
viscosity solution,and more generally for a comprehensive treatment
of viscosity solutions theory.

\smallskip

We record for later use
 \smallskip
 \begin{proposition} \label{testlipfct}
 Let  $u$ be an USC subsolution (resp. LSC supersolution) of a Hamilton--Jacobi equation of the form
\[F(x,u,Du)=0 \qquad\hbox{in $\mathbb T^N$,}\]
where $F$ is continuous in all arguments and nondecreasing in $u$.
Let $\psi$ be a
 Lipschitz-continuous supertangent (resp. subtangent) to $u$ at some point $x_0$. Then
 $$ F(x_0, u(x_0),p) \leq 0 \quad \hbox{(resp. $\geq 0$)} \quad \mbox{ for some } p \in \partial \phi(x).  $$
\end{proposition}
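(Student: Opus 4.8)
The plan is to prove the statement for the subsolution case (the supersolution case being symmetric). The key point is to replace the Lipschitz supertangent $\psi$ by genuine $C^1$ test functions, so that the ordinary definition of viscosity subsolution applies, and then pass to the limit exploiting the structure of the generalized gradient $\partial\psi(x_0)$.

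First I would recall that since $\psi$ is Lipschitz-continuous and supertangent to $u$ at $x_0$, it is differentiable almost everywhere by Rademacher's Theorem, with generalized gradient $\partial\psi(x_0)$ a nonempty compact convex set. The strategy is a regularization argument: I would approximate $\psi$ near $x_0$ from above by smooth functions. Concretely, one can take a sequence of points $x_n \to x_0$ at which $\psi$ is differentiable, with $D\psi(x_n) \to p_0$ for some extreme point $p_0$ of $\partial\psi(x_0)$; at each such $x_n$ the function $x \mapsto \psi(x) + D\psi(x_n)\cdot(x-x_n) - \psi(x_n) + o(|x-x_0|)$ furnishes (after a standard sup-convolution / mollification step) a legitimate $C^1$ supertangent to $u$ at a nearby point $y_n \to x_0$. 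Applying the definition of viscosity subsolution at $y_n$ yields $F(y_n, u(y_n), q_n) \le 0$ for the corresponding gradient $q_n$ with $q_n \to p_0$.

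Then I would pass to the limit $n \to \infty$. Using the upper semicontinuity of $u$, the continuity of $F$ in all arguments, and the monotonicity of $F$ in the $u$-slot, together with $y_n \to x_0$ and $u(y_n) \to u(x_0)$ (which requires care, since for a USC function one only controls $\limsup u(y_n) \le u(x_0)$, and here the monotonicity of $F$ in $u$ is exactly what lets us upgrade the inequality in the right direction), I obtain $F(x_0, u(x_0), p_0) \le 0$. Since this holds for an appropriate $p_0 \in \partial\psi(x_0)$, the conclusion follows. The cleanest route is to realize that it suffices to produce one such $p$, so I would aim the approximation at whichever limiting gradient makes the test legitimate rather than at every point of $\partial\psi(x_0)$.

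The main obstacle I anticipate is the technical step of turning a merely Lipschitz supertangent into a $C^1$ test function while keeping control on the gradient. A Lipschitz function that touches $u$ from above at $x_0$ need not be differentiable at $x_0$, and even at points of differentiability its differential need not be a viscosity-admissible slope. The standard device is a sup-convolution of $\psi$ (or a doubling-of-variables argument), which smooths $\psi$ and produces, via the maximum principle for the doubled function, test points $y_n$ where a smooth function is supertangent to $u$ with gradient in a neighborhood of $\partial\psi(y_n)$; one then invokes upper semicontinuity of the set-valued map $x \mapsto \partial\psi(x)$ to guarantee that the limiting slope lies in $\partial\psi(x_0)$. Keeping the monotonicity of $F$ in $u$ aligned with the semicontinuity of $u$ throughout the limit passage is the delicate bookkeeping, but it is precisely the hypothesis $F$ nondecreasing in $u$ that makes the argument close.
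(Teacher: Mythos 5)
Your overall strategy is the same as the paper's: regularize the Lipschitz supertangent by convolution, locate nearby touching points $y_n\to x_0$ where the regularized function is a legitimate test function, apply the viscosity inequality there, and pass to the limit using the upper semicontinuity of $x\mapsto\partial\psi(x)$ (in the paper this is done with the inf--convolution $\psi_\eps(x)=\min_y\bigl(\psi(y)+\tfrac1{2\eps}|y-x|^2\bigr)$, whose semiconcavity guarantees $D^+\psi_\eps\neq\emptyset$, and with the fact that $\partial\psi_\eps(x_\eps)\cap\partial\psi(y_\eps)\neq\emptyset$ for $y_\eps$ realizing the minimum). A first, minor point: for a \emph{sub}solution tested by a \emph{super}tangent you need the semiconcave regularization, i.e.\ the inf--convolution, not the sup--convolution you name; the sup--convolution is semiconvex and may have empty superdifferential, so it produces no admissible slope in $D^+u$.

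The genuine gap is in your limit passage. You correctly flag that upper semicontinuity only gives $\limsup_n u(y_n)\le u(x_0)$, but your proposed repair --- that the monotonicity of $F$ in $u$ ``upgrades the inequality in the right direction'' --- goes the wrong way. If $u(y_n)\to\ell$ along a subsequence with $\ell\le u(x_0)$, continuity of $F$ gives $F(x_0,\ell,p)\le 0$; since $F$ is \emph{nondecreasing} in $u$ and $\ell\le u(x_0)$, this only yields $F(x_0,u(x_0),p)\ge F(x_0,\ell,p)$, from which nothing follows. (Monotonicity would close the argument only if $F$ were nonincreasing in $u$, which is the opposite of the hypothesis.) The correct fix, and the one the paper uses, does not involve monotonicity at all: because $\psi$ may be taken to be a strict supertangent, the touching points $y_n$ are maximizers of $u-\psi_{\eps_n}$ over a fixed closed ball $B$, so
\[
u(y_n)-\psi_{\eps_n}(y_n)=\max_B\,(u-\psi_{\eps_n})\;\longrightarrow\;\max_B\,(u-\psi)=u(x_0)-\psi(x_0),
\]
and since $\psi_{\eps_n}(y_n)\to\psi(x_0)$ this forces $\liminf_n u(y_n)\ge u(x_0)$, hence the genuine convergence $u(y_n)\to u(x_0)$. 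This step is available within your framework, but as written your argument does not close; you need to replace the appeal to monotonicity by the convergence of the maxima. Your first-paragraph device (points of differentiability $x_n$ of $\psi$ with $D\psi(x_n)\to p_0$) also does not by itself produce supertangents to $u$, so the convolution route is the one to keep.
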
\begin{proof} Given $\varepsilon >0$, we define the
$\varepsilon$--inf--convolution of $\psi$ via
\[\psi_\varepsilon(x)=\min_{y \in \mathbb T^N}\left(\psi(y)+\frac{1}{2\varepsilon}|y-x|^2\right). \] We can
 assume, without loosing generality,  $\psi$ to be strict supertangent, and so $x_0$ to be
the unique maximizer of $u -\psi$ in a suitable closed ball $B$
centered at $x_0$.  From this uniqueness property we deduce that any
sequence $x_\eps$ of maximizers of $u-\psi_\eps$ in $B$ converges to
$x_0$. Hence $x_\eps$ is in the interior of $B$ for $\eps$
sufficiently small, and then for such $\eps$, $\psi_\eps$ is
supertangent to $u$ at $x_\eps$. The inf--convolution being
semiconcave,  we deduce  that
\begin{equation}\label{liptestb1}
    H(x_\eps, u(x_\eps), p_\eps) \leq 0 \qquad\hbox{for any $p_\eps \in
\p\psi_\eps(x_\eps)$.}
\end{equation}
  Further
\[ u(x_\eps)- \psi_\eps(x_\eps) = \max_B u-\psi_\eps \to \max_B
u-\psi= u(x_0)- \psi(x_0),\] which, implies, bearing in mind that
$\lim \psi_\eps(x_\eps) = \psi(x_0)$
\begin{equation}\label{liptestb0}
    \lim_{\eps \to 0} u(x_\eps)=u(x_0).
\end{equation}
 It is also well known  that for any $y_\eps$ realizing the minimum
 in the formula defining $\psi_\eps$ at
 $x_\eps$, we have
\[\p\psi_\eps(x_\eps) \cap D^+\psi(y_\eps)  \subset
\p\psi_\eps(x_\eps) \cap \p\psi(y_\eps) \neq \emptyset.\] Taking
into account that $y_\eps \to x_0$, as $\eps$ goes to $0$,
exploiting \eqref{liptestb0},
 plus the continuity properties of
 $F$ and generalized gradients, we
find for $q_\eps \in \p\psi_\eps(x_\eps) \cap \p\psi(y_\eps)$
\begin{eqnarray*}
q_\eps &\to&  p \in \p\psi(x) \\
F(x_\eps,u(x_\eps),q_\eps)&\to& F(x, u(x),p)  \\
\end{eqnarray*}
then, thanks to \eqref{liptestb1}
\[F(x,u(x),p) \leq 0 \quad\hbox{and} \quad p \in \p\psi(x)\]
as claimed.
 $\hfill{\Box}$
\end{proof}

\subsection{Setting of the problem}\label{system}
  Here we introduce the system under investigation as well as some basic assumptions and preliminary
  facts. We basically follow the approach of \cite{Davini}.

  \smallskip

   We deal with a one--parameter family of  weakly coupled systems of Hamilton-Jacobi equations of the form
\begin{equation}\label{wcs} \tag{HJ$\alpha$}
H_{i}(x,Du_{i})+\sum_{j=1}^{m}a_{ij}u_{j}(x)=\alpha \quad \mbox{in } \mathbb{T}^{N} \quad \mbox{ for every } i \in \{1,\cdots,m\},
\end{equation}

\noindent where $\alpha$ is a real constant and $H_{1},
\cdots,...,H_{m}$ are Hamiltonians satisfying, for any $i \in
\{1,\cdots,m\}$, the following set of assumptions:
\begin{itemize}
\item[(H1)] $ H_{i}:\mathbb{T}^{N}\times\mathbb{R}^{N}\rightarrow\mathbb{R} \quad \mbox{is continuous} ;$
 \item[(H2)] $p\mapsto H_{i}(x,p) \quad \mbox{is convex for every } x \in \mathbb{T}^{N}; $
  \item[(H3)]$p\mapsto H_{i}(x,p) \quad \mbox{is coercive for every } x \in \mathbb{T}^{N}. $
 \end{itemize}
The above conditions will be assumed throughout the paper  without
further mentioning. For some specific results we will also need the
following additional requirements for  $i \in \{1,2,...,m\}$:
\begin{itemize}
 \item[(H4)] $p\mapsto H_{i}(x,p) \quad \mbox{is strictly convex for every } x \in \mathbb{T}^{N}
 $;
  \item[(H5)] $(x, p)\mapsto H_{i}(x,p) \quad \mbox{is locally Lipschitz continuous in $\mathbb T^N \times \mathbb R^N$ .}$
 \end{itemize}

The $m\times m$ coupling matrix $A:=(a_{ij})$   satisfies:

\begin{itemize}
\item[(A1)]$a_{ij}\leq 0 \mbox{ for every }i\neq j$;
\item[(A2)]$\sum \limits_{j=1}^{m}a_{ij} =0 \mbox { for any } i \in \{1,\cdots,m\};$
\item[(A3)]  is irreducible, i.e for every $W \subsetneq \{1,2,...,m\}$ there exists $i \in W $ and $j \notin W$ such that $a_{ij}<0.$
\end{itemize}

The relevant consequence  of   the irreducibility condition  in our
context is
 that the system cannot split into independent subsystems. Under our assumptions the coupling matrix $A$ is singular with
rank  $m-1$ and kernel spanned by $\mathbf {1}$. Moreover
$\mathrm{Im} A$, which has dimension  $m -1$, cannot contain vectors
with strictly positive or negative components. This  in particular
implies that $\mathrm{Im}(A) \cap \mathrm{ker}(A)=\{0\}$.

\medskip

We also derive:
\begin{proposition}
All the diagonal elements of the coupling matrix  $A$ are strictly
positive.
\end{proposition}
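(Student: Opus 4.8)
The plan is to extract nonnegativity of the diagonal directly from the two sign/row-sum conditions (A1)--(A2), and then to use irreducibility (A3) to upgrade it to strict positivity by excluding the degenerate case of a vanishing diagonal entry. The whole argument is elementary; the only genuinely structural ingredient is (A3).

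First I would fix an index $i$ and rewrite the row-sum condition (A2) as
$a_{ii} = -\sum_{j \neq i} a_{ij}$. By (A1) every off-diagonal entry $a_{ij}$ with $j \neq i$ is nonpositive, so each $-a_{ij} \geq 0$, and therefore $a_{ii} = \sum_{j \neq i}(-a_{ij}) \geq 0$. This already gives $a_{ii} \geq 0$, and the remaining task is to rule out equality. To this end I would argue by contradiction and suppose $a_{ii} = 0$ for some $i$. Since $a_{ii}$ is now a sum of the nonnegative quantities $-a_{ij}$, its vanishing forces each summand to vanish, i.e. $a_{ij} = 0$ for all $j \neq i$; equivalently, the entire $i$-th row of $A$ is zero.

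Finally I would invoke the irreducibility condition (A3) applied to the singleton $W = \{i\}$, which is a nonempty proper subset of $\{1,\dots,m\}$ (recall $m \geq 2$). Condition (A3) then yields indices, necessarily $i \in W$ and some $j \notin W$, hence $j \neq i$, for which $a_{ij} < 0$; this directly contradicts the fact that the $i$-th row vanishes. We therefore conclude $a_{ii} > 0$ for every $i$. I do not expect any real obstacle here: the single point deserving care is precisely the application of (A3) to a singleton, which is exactly the requirement preventing one equation from decoupling from the rest of the system, and hence forbidding a row of $A$ from being identically zero.
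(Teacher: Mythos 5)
Your proof is correct and follows essentially the same route as the paper's: nonnegativity of $a_{ii}$ from (A1)--(A2), then observing that $a_{ii}=0$ forces the whole $i$-th row to vanish, contradicting irreducibility applied to $W=\{i\}$. You have merely spelled out the steps that the paper leaves implicit.
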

\begin{proof}
It is clear  that $a_{ii} \geq 0$ for any $i$. Indeed, if $a_{kk}=0$
for some $k \in \{1,2,...,m\}$, then condition $(A2)$  would imply
$a_{kj}=0$ for every $j \in \{1,2,...,m\}$, which contradicts the
irreducibility character of the matrix. $\hfill{\Box}$
\end{proof}

\medskip

The notion of viscosity (sub/super) solution  can be easily adapted
to systems as (\ref{wcs}).
\begin{definition}\textbf{(Viscosity solution)}.\\
 \noindent We say that a continuous function $\mathbf u:\mathbb{T}^{N}\rightarrow\mathbb{R}^{m}$ is a viscosity subsolution of (\ref{wcs}) if for every $(x,i) \in \mathbb{T}^{N} \times \{1,2,...,m\} $, we have$$H_{i}(x,p)+\sum_{j=1}^{m}a_{ij}u_{j}(x)\leq \alpha \quad \mbox{for every } p \in D^{+}u_{i}(x).$$

 \noindent Symmetrically, $\mathbf u$ is a viscosity supersolution of (\ref{wcs}) if for every
  $(x,i) \in \mathbb{T}^{N} \times \{1,2,...,m\} $, we have$$H_{i}(x,p)+\sum_{j=1}^{m}a_{ij}u_{j}(x)
  \geq \alpha \quad \mbox{for every } p \in D^{-}u_{i}(x).$$

\noindent  Finally, if $\mathbf u$ is both a viscosity sub and
supersolution, then it is called a viscosity solution.
\end{definition}

From now on,  we will drop the term viscosity since no other kind of
weak solutions will be considered. As already pointed out in the
Introduction, we will be especially interested in the critical
weakly coupled system
\begin{equation}\label{criticalwcs} \tag{HJ$\beta$}
H_{i}(x,Du_{i})+\sum_{j=1}^{m}a_{ij}u_{j}(x)= \beta \quad \mbox{in }
\mathbb{T}^{N} \quad \mbox{ for every } i \in \{1,\cdots,m\},
\end{equation}

where
\[\beta = \min \{ \alpha \in \mathbb R \mid
\;\hbox{\eqref{wcs} admits subsolutions}\}\]

\smallskip

We straightforwardly derive form the coercivity condition:
\smallskip

\begin{proposition}\label{equilip} Let $\alpha \geq \beta$. The family of all  subsolutions to \eqref{wcs} is
equi-Lipschitz continuous with Lipschitz constant denoted by
$\ell_\alpha$.
\end{proposition}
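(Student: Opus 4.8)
The plan is to show that coercivity alone forces a uniform gradient bound, once the zeroth--order coupling term has been controlled. First I would record the uniform form of (H3): since $\mathbb{T}^{N}$ is compact and each $H_i$ is continuous, for every constant $c$ there is a radius $R(c)$ such that $H_i(x,p)\le c$ forces $|p|\le R(c)$, uniformly in $x\in\mathbb{T}^{N}$ and in $i\in\{1,\dots,m\}$. Reading the subsolution inequality at a point $x$ and for $p\in D^{+}u_i(x)$ gives exactly
\[ H_i(x,p)\le \alpha-\sum_{j=1}^{m}a_{ij}u_j(x),\]
so it suffices to bound the coupling term $\sum_j a_{ij}u_j(x)$ from below by a constant independent of the particular subsolution. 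Using (A2) I rewrite it as $\sum_{j}a_{ij}u_j(x)=\sum_{j\neq i}|a_{ij}|\,(u_i(x)-u_j(x))$, so that a lower bound on it is equivalent to an upper bound on the cross--component oscillation, hence to an upper bound on the global oscillation $\delta:=\max_{i,x}u_i(x)-\min_{j,x}u_j(x)$, which dominates it.

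Granting such a bound $\delta\le C(\alpha)$, the coupling term is at least $-C(\alpha)\max_i a_{ii}$, whence $H_i(x,p)\le \alpha+C(\alpha)\max_i a_{ii}=:c_\alpha$ for every $p\in D^{+}u_i(x)$; by the uniform coercivity this yields $|p|\le R(c_\alpha)=:\ell_\alpha$. A continuous function whose superdifferential is contained in $\overline B(0,\ell_\alpha)$ wherever it is nonempty is $\ell_\alpha$--Lipschitz, by the standard fact that coercive continuous subsolutions are Lipschitz; applying this to each component gives the claim. This is the part justifying the word \emph{straightforwardly}, as it uses nothing beyond (H1), (H3) and the uniform control of the coupling.

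The substantive point, and the step I expect to be the main obstacle, is the uniform oscillation bound $\delta\le C(\alpha)$, which is where the matrix hypotheses (A1)--(A3) must enter. The natural starting observation exploits the maximal component: after normalizing $\min_{j,x}u_j=0$, choose $(i_0,x_0)$ with $u_{i_0}(x_0)=\delta$ the global maximum; since $x_0$ maximizes $u_{i_0}$, we have $0\in D^{+}u_{i_0}(x_0)$, and the subsolution inequality there gives
\[ \sum_{j\neq i_0}|a_{i_0 j}|\,\big(\delta-u_j(x_0)\big)\le \alpha-H_{i_0}(x_0,0)\le \alpha+M_0, \qquad M_0:=\max_{i,x}|H_i(x,0)|.\]
Hence every index $j$ adjacent to $i_0$ in the connectivity graph (those with $a_{i_0 j}<0$) satisfies $u_j(x_0)\ge \delta-(\alpha+M_0)/a_-$, with $a_-:=\min\{|a_{ij}|:a_{ij}<0\}>0$. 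The delicate task is to propagate this control from the neighbours of $i_0$ to all indices, using the irreducibility (A3) to join any index to $i_0$ through a chain of negative entries; the obstruction is that after the first step one is no longer at a maximum point, so the estimate must be organized as an induction along such chains, passing at each stage to the maximum point of the current component and tracking the values transported between the successive points. I would either carry out this connectivity induction directly, or, more economically, invoke the equi--bounded oscillation of critical subsolutions established in the Weak KAM framework for these systems to obtain $\delta\le C(\alpha)$ at once, which then closes the argument through the reduction above.
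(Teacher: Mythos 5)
Your overall strategy---bound the coupling term $\sum_j a_{ij}u_j(x)=\sum_{j\neq i}|a_{ij}|\,(u_i(x)-u_j(x))$ from below by a constant depending only on $\alpha$ and $A$, then invoke coercivity and the standard fact that a continuous function with uniformly bounded superdifferential is Lipschitz---is the right one (the paper gives no proof, only the word ``straightforwardly''). But the step you yourself flag as delicate is where the argument breaks. Your induction along chains of negative entries passes at each stage to the \emph{maximum point} of the current component; there the subsolution inequality read with $p=0$ yields lower bounds on the adjacent components \emph{at that point}, hence lower bounds on their maxima, and iterating gives $\max_x u_k(x)\geq\delta-C$ for every $k$. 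This says nothing about $\min_x u_k(x)$: the subsolution property carries no information at minimum points, so nothing prevents the component realizing the global minimum $0$ from also having maximum close to $\delta$ with $\delta$ arbitrarily large, at the price of a large gradient. Ruling that out requires an a priori bound on the oscillation of a single component, i.e.\ precisely the Lipschitz estimate being proved; and the obvious bootstrap $\delta\leq R\big(\alpha+c\,\delta\big)\cdot\mathrm{diam}\,\mathbb{T}^N+C$ does not close under mere coercivity (which is all that is available here, since the reduction to superlinear Hamiltonians in the paper is performed \emph{after} this proposition and uses it), because the coercivity radius $R$ may grow arbitrarily fast. So the global oscillation $\delta$ cannot be bounded first by this route, and your fallback of ``invoking the equi-bounded oscillation from the Weak KAM framework'' is not available either: that bound is normally a consequence of equi-Lipschitzianity, and concerns only $\alpha=\beta$.

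The repair is to run your chain argument at a \emph{fixed} point instead of hopping between maximum points, since---as you correctly note at the outset---all that is needed is the cross-component oscillation $\max_j u_j(x)-\min_j u_j(x)$ at each $x$, not $\delta$. First observe that each $u_i$ is Lipschitz with some constant depending on $\mathbf u$ (it is a viscosity subsolution of $H_i(x,Du_i)\leq \alpha-\sum_j a_{ij}u_j(x)$, whose right-hand side is a fixed bounded continuous function), so at a.e.\ $x$ all $m$ components are simultaneously differentiable and all $m$ inequalities can be used at the same point. Starting from the index $j^*$ maximizing $j\mapsto u_j(x)$, where every term $|a_{j^*k}|\,(u_{j^*}(x)-u_k(x))$ is nonnegative, and propagating through the connectivity graph exactly as you propose---but without moving the base point---one obtains $\max_j u_j(x)-\min_j u_j(x)\leq C(\alpha)$ a.e., hence everywhere by continuity, and the reduction in your first paragraph then closes the proof. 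A shorter route avoids the chain altogether: multiply the $i$-th inequality by the component $o_i>0$ of the equilibrium distribution and sum over $i$; since $\mathbf o\,A=0$ the coupling disappears and $\sum_i o_i H_i(x,Du_i(x))\leq\alpha$ a.e., and since each $H_j$ is bounded below this gives $H_i(x,Du_i(x))\leq C(\alpha)/\min_i o_i$ a.e., from which the uniform Lipschitz constant follows at once.
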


\medskip

 Moreover, owing to the convexity of the
Hamiltonians, the notion of viscosity and a.e. subsolutions are
equivalent for \eqref{wcs}. Furthermore, we can express the same
property using generalized gradients of any component. Namely, $u$
is a subsolution to \eqref{wcs} if and only if
$$ H_i(x,p) + \sum_{j=1}^{m}a_{ij}u_{j}(x) \leq \alpha $$
for any $x \in \mathbb{T}^{N}$, $p \in \partial u_{i}(x)$, $i \in
\{1, \cdots, m\}$.

\medskip

Due to the fact that any subsolution of (\ref{criticalwcs}) is
$\ell_\beta$--Lipschitz continuous,   we may modify the $H_i$'s
outside the compact set $\{(x,p): |p| \leq \ell_\beta\}$, to obtain
a new Hamiltonian which is still continuous and convex, and in
addition  satisfies superlinearity condition, for every $i$. Since
the sublevels contained in $B(0,\ell_\beta)$ are not affected, the
subsolutions of the system obtained by replacing the $H_i$'s in
(\ref{criticalwcs}) by the new
Hamiltonians are the same as the original one.\\
In the remainder of the paper, we will therefore assume without any
loss of generality
\begin{itemize}
    \item[(H'2)] $H_{i}$ is superlinear in $p$ for any
$i \in \{1,2,...,m\}$,
\end{itemize}

 We can thus  associate to any $H_i$ a
Lagrangian function $L_{i}$  through the Fenchel transform, i.e.
$$L_{i}(x,q)=\sup _{p \in \mathbb{R}^{N}}\{pq-H_{i}(x,p)\}.$$
The function $L_{i}$ is continuous on $\mathbb{T}^{N} \times
\mathbb{R}^{N}$, convex and superlinear in $q$.

\bigskip

As pointed out in \cite{Davini},  there is   restriction in the
value that a subsolution  to the system \eqref{wcs} can assume at a
given point of the torus.  An adaptation of the pull-up method used
in the scalar version of the theory gives:

 \begin{proposition}\label{maxsub}
 Let $\alpha \geq \beta$. The maximal subsolution of (\ref{wcs}) among those taking the same admissible  value at a given point $y$ is solution to (\ref{wcs})
  in $\mathbb{T}^{N} \setminus \{y\}$.
  \end{proposition}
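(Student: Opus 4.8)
The plan is to realize the maximal subsolution explicitly as a supremum of the relevant family and then to run a Perron--type argument, the coupling being controlled throughout by the sign condition (A1). Let $\mathbf b$ denote the prescribed admissible value at $y$ and set $\mathcal F=\{\mathbf v \mid \mathbf v \text{ subsolution of }\eqref{wcs},\ \mathbf v(y)=\mathbf b\}$, which is nonempty by admissibility. Define $\mathbf U$ componentwise by $U_i=\sup_{\mathbf v\in\mathcal F}v_i$. By Proposition \ref{equilip} every $\mathbf v\in\mathcal F$ is $\ell_\alpha$--Lipschitz and is pinned at $y$, so $\mathcal F$ is uniformly bounded and $\mathbf U$ is $\ell_\alpha$--Lipschitz with $\mathbf U(y)=\mathbf b$. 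Once I check that $\mathbf U$ is itself a subsolution, it is the maximal element of $\mathcal F$, and the whole content of the statement reduces to proving that $\mathbf U$ is a supersolution on $\mathbb{T}^{N}\setminus\{y\}$.

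First I would verify that $\mathbf U$ is a subsolution. Fix $i$, a point $x_0$, and a $C^1$ function $\phi$ supertangent to $U_i$ at $x_0$, which I may assume to realize a strict maximum of $U_i-\phi$. By the standard localization for suprema of equi-Lipschitz functions there are $\mathbf v^n\in\mathcal F$ and points $x_n\to x_0$ such that $v^n_i-\phi$ has a local maximum at $x_n$ and $v^n_i(x_n)\to U_i(x_0)$. Testing the $i$-th inequality of $\mathbf v^n$ with $D\phi(x_n)\in D^+v^n_i(x_n)$ gives $H_i(x_n,D\phi(x_n))+a_{ii}v^n_i(x_n)+\sum_{j\neq i}a_{ij}v^n_j(x_n)\le\alpha$. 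Here (A1) enters: since $a_{ij}\le 0$ and $v^n_j\le U_j$ for $j\neq i$, one has $a_{ij}v^n_j(x_n)\ge a_{ij}U_j(x_n)$, so replacing the off-diagonal terms by the corresponding $U_j$ only lowers the left-hand side; letting $n\to\infty$ and using $v^n_i(x_n)\to U_i(x_0)$ together with the continuity of $H_i$ yields $H_i(x_0,D\phi(x_0))+\sum_j a_{ij}U_j(x_0)\le\alpha$, as required.

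The core of the argument is the supersolution property off $y$, which I would establish by contradiction. Suppose it fails at some $x_0\neq y$ for some index $i$: there is $p\in D^-U_i(x_0)$, realized by a smooth strict subtangent $\psi$, with $H_i(x_0,p)+\sum_j a_{ij}U_j(x_0)<\alpha$. Freezing the off-diagonal components, I consider the scalar equation $G_i(x,w,Dw):=H_i(x,Dw)+a_{ii}w+\sum_{j\neq i}a_{ij}U_j(x)=\alpha$, which is continuous and nondecreasing in $w$ because $a_{ii}>0$. Then I perform the usual Perron bump: pick $\zeta\ge 0$ smooth, supported in a small ball $B$ around $x_0$ with $B\not\ni y$ and $\zeta(x_0)>0$, and for $\delta$ small set $\psi_\delta=\psi+\delta\zeta$; by continuity the strict inequality propagates, so $\psi_\delta$ is a subsolution of $G_i=\alpha$ on $B$, while $\psi_\delta=\psi\le U_i$ near $\partial B$. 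Hence $W_i:=\max(U_i,\psi_\delta)$ is a subsolution of $G_i=\alpha$, coincides with $U_i$ outside $B$, and satisfies $W_i(x_0)=\psi_\delta(x_0)>U_i(x_0)$.

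It remains to check that $\mathbf W:=(U_1,\dots,W_i,\dots,U_m)$ is a subsolution of the full system, and this is exactly where (A1) is decisive a second time. The $i$-th inequality holds because $W_i$ solves $G_i\le\alpha$ with the unchanged off-diagonal components; for any other index $k\neq i$ we have $D^+W_k=D^+U_k$ and, since $a_{ki}\le 0$ and $W_i\ge U_i$, the term $a_{ki}W_i\le a_{ki}U_i$, so that $\sum_j a_{kj}W_j\le\sum_j a_{kj}U_j$ and the $k$-th inequality for $\mathbf U$ is inherited by $\mathbf W$. Thus $\mathbf W\in\mathcal F$ while $W_i(x_0)>U_i(x_0)=\sup_{\mathcal F}v_i(x_0)$, contradicting the definition of $\mathbf U$. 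I expect the main obstacle to be precisely the bookkeeping of the coupling in these two monotonicity steps — ensuring that raising a single component never violates the other equations — rather than the by now routine scalar Perron construction.
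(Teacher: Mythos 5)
Your argument is correct and is precisely the ``pull-up'' (Perron bump) method that the paper invokes without writing out: realize the maximal element as a componentwise supremum, and if the supersolution property failed off $y$, bump one component locally and use the sign condition (A1) twice (once to pass the subsolution property to the sup, once to check that raising a single component preserves the other inequalities) to contradict maximality. The two monotonicity steps you flag are exactly the only system-specific points, and you handle them correctly.
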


\smallskip

As mentioned  in the Introduction, the Aubry set associated to the
critical system, denoted by $\mathcal{A}$, is a closed nonempty
subset of $\mathbb{T}^{N}$ where the obstruction of getting global
strict subsolution concentrates, see \cite{Davini}, \cite{ISZ}. More
specifically, there cannot be any critical subsolution which is, in
addition, locally strict at a point in $\mathcal{A}$, in the sense
of the subsequent definition.

\medskip

\begin{definition} Given a subsolution $\mathbf u$ of (\ref{criticalwcs}). We say that $u_{i}$ is   locally strict at $y \in \mathbb{T}^{N}$ if
there exists a neighborhood $U$ of $y$ and $\delta >0$ such that
 $$H_{i}(x,Du_{i}(x)) + \sum_{j=1}^{m}a_{ij}u_{j}(x) \leq \beta - \delta \quad \mbox{ for a.e. } x \in U. $$
 We  say that $u_i$  is strict in an open subset $U$ of $\mathbb T^N$  if it is locally strict at any $y\in U$.
 We say that $\mathbf u$ is locally strict at $y$  or strict in an open subset of the torus, if the propriety holds for any
 component.
 \end{definition}

 \medskip

 An useful criterion  is the following:

\begin{lemma}\label{criteria} Let $y \in \mathbb{T}^{N}$ and $\mathbf u$ be a subsolution of (\ref{criticalwcs}).
The $i-$th component of $\mathbf u$ is locally strict at $y$ if and only if
$$ H_{i}(y,p)+\sum_{j=1}^{m}a_{ij}u_{j}(y) < \beta \quad \mbox{ for any } p \in \partial u_i(y).$$
\end{lemma}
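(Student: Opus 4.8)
The plan is to prove the two implications separately, relying throughout on three standard facts about the Clarke generalized gradient of the Lipschitz function $u_i$ (which is Lipschitz by Proposition \ref{equilip}): the set-valued map $x \mapsto \partial u_i(x)$ has nonempty compact convex values and is upper semicontinuous; at every point where $u_i$ is differentiable one has $Du_i(x) \in \partial u_i(x)$; and $\partial u_i(y)$ equals the convex hull of limits $\lim_n Du_i(x_n)$ taken along sequences $x_n \to y$ lying in any prescribed full-measure subset of $\mathbb{T}^{N}$.

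For the implication ``criterion $\Rightarrow$ locally strict'', I would first set $c := \max_{p \in \partial u_i(y)} \big( H_i(y,p) + \sum_{j} a_{ij} u_j(y)\big)$, which is attained by compactness and is strictly below $\beta$ by hypothesis. I would then upgrade the strict inequality at $y$ to a uniform strict inequality on a whole neighborhood: there exist a neighborhood $U$ of $y$ and $\delta > 0$ with $H_i(x,p) + \sum_j a_{ij} u_j(x) \leq \beta - \delta$ for every $x \in U$ and every $p \in \partial u_i(x)$. This I would obtain by contradiction, producing a failing sequence $x_n \to y$, $p_n \in \partial u_i(x_n)$; by equi-Lipschitzianity the $p_n$ are bounded, so a subsequence converges to some $\bar p$ which, by upper semicontinuity, lies in $\partial u_i(y)$, and the continuity of $H_i$ and of the $u_j$ forces $H_i(y,\bar p) + \sum_j a_{ij} u_j(y) \geq \beta > c$, a contradiction. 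Restricting the neighborhood inequality to the full-measure set of differentiability points, where $Du_i(x) \in \partial u_i(x)$, yields exactly the locally strict condition.

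For the converse ``locally strict $\Rightarrow$ criterion'', let $U$, $\delta$ be as in the definition of local strictness and let $E \subset U$ be the full-measure set where $u_i$ is differentiable and the a.e. inequality $H_i(x, Du_i(x)) + \sum_j a_{ij} u_j(x) \leq \beta - \delta$ holds. Given any $p \in \partial u_i(y)$, I would write $p$ as a convex combination of gradient limits $q_k = \lim_n Du_i(x_n^k)$ with $x_n^k \in E$ and $x_n^k \to y$. Passing to the limit along each sequence, the continuity of $H_i$ and of the $u_j$ gives $H_i(y, q_k) + \sum_j a_{ij} u_j(y) \leq \beta - \delta$ for each $k$; the convexity of $H_i$ in the momentum (assumption (H2)) then transfers this bound to the convex combination, so that $H_i(y,p) + \sum_j a_{ij} u_j(y) \leq \beta - \delta < \beta$, which is the desired criterion.

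The genuinely load-bearing step, and the one I expect to be the main obstacle, is the passage in the first implication from the pointwise bound $c < \beta$ at $y$ to a uniform bound on an entire neighborhood: this is exactly where the upper semicontinuity of $\partial u_i$ is indispensable, and it must be combined with the equi-Lipschitz bound to keep the competing momenta $p_n$ in a compact set. A secondary subtlety, in the converse, is the bookkeeping of null sets, so that the sequences generating $\partial u_i(y)$ are chosen inside the full-measure set $E$ on which the strict a.e. inequality holds. Both are handled once the standard Clarke-gradient properties recorded at the outset are invoked.
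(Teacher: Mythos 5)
Your proof is correct. The paper states Lemma \ref{criteria} without proof (it is the standard adaptation of the analogous scalar criterion), and your argument is exactly the expected one: compactness and upper semicontinuity of the Clarke gradient, together with the equi-Lipschitz bound, to upgrade the pointwise strict inequality to a uniform one on a neighborhood, and the gradient-limit representation of $\partial u_i(y)$ (restricted to a full-measure set) combined with convexity of $H_i$ in $p$ for the converse. Both directions are sound as written.
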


 \medskip

 The formal PDE definition of Aubry set is:

 \begin{definition}\textbf{(Aubry set)}.\\
 \noindent The Aubry set $\mathcal A$ is made up by points $y$ such
 that any maximal subsolution  of the critical system among those
 taking a given admissible value at $y$ is a solution on the whole
 torus.
 \end{definition}
 We refer to \cite{ISZ} for a more  geometric characterization of
 $\mathcal A$  through cycles, in a suitable probabilistic framework.
The following results on the Aubry set are taken from \cite{Davini}.
They generalize to systems facts already known in the case of a
single equation.

\smallskip

 \begin{theorem} \label{t22}  A point $y \in {\mathcal A}$ if
 and only if  no critical subsolution is  locally strict at $y$.
\end{theorem}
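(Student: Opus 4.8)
The plan is to pivot on Proposition \ref{maxsub}. Fix a critical subsolution $\mathbf u$ and let $\mathbf v$ be the maximal subsolution of \eqref{criticalwcs} among those taking the value $\mathbf u(y)$ at $y$. By Proposition \ref{maxsub}, $\mathbf v$ already solves the system on $\mathbb T^N\setminus\{y\}$ and is a subsolution everywhere; hence $\mathbf v$ is a solution on the whole torus if and only if it is a supersolution at $y$. Consequently $y\in\mathcal A$ precisely when, for every admissible value, the corresponding maximal subsolution is a supersolution at $y$. The theorem therefore reduces to the equivalence: some critical subsolution is locally strict at $y$ $\iff$ the associated maximal subsolution fails the supersolution test at $y$. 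I would establish the two implications separately.

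For the implication ``failure of the supersolution test at $y$ $\Rightarrow$ existence of a locally strict subsolution'', suppose $\mathbf v$ is not a supersolution at $y$: there are an index $i$ and $p\in D^-v_i(y)$ with $H_i(y,p)+\sum_j a_{ij}v_j(y)<\beta$. Choosing a $C^1$ subtangent $\phi$ to $v_i$ at $y$ with $D\phi(y)=p$ and using the continuity of $H_i$, the function $\phi$ is a strict subsolution of the $i$-th equation (with the other components frozen to $v_j$) on a small ball $B$. Shifting $\phi$ slightly upward and taking the maximum with $v_i$ over $B$ produces, via the stability of subsolutions under finite maxima, a global critical subsolution whose $i$-th component is locally strict at $y$. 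To upgrade this to strictness in \emph{every} component, I would lower each remaining component by a small nonnegative bump concentrated near $y$: since $a_{jj}>0$, each such lowering creates a strict margin in the $j$-th equation at $y$, while the off-diagonal signs $a_{ij}\le 0$ ensure that lowering $u_j$ only helps the equations $i\neq j$; the finitely many perturbations are then kept small enough to preserve the margins already gained. Lemma \ref{criteria} finally certifies local strictness of all components.

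For the converse, assume $\mathbf u$ is a critical subsolution locally strict at $y$, and let $\mathbf v$ be the maximal subsolution taking the value $\mathbf u(y)$ at $y$. Maximality gives $\mathbf u\le\mathbf v$ with $\mathbf u(y)=\mathbf v(y)$, so every $C^1$ subtangent to $u_i$ at $y$ is also subtangent to $v_i$ there, whence $D^-u_i(y)\subseteq D^-v_i(y)$. For any $p\in D^-u_i(y)\subseteq\partial u_i(y)$, Lemma \ref{criteria} gives $H_i(y,p)+\sum_j a_{ij}v_j(y)=H_i(y,p)+\sum_j a_{ij}u_j(y)<\beta$, so $\mathbf v$ is not a supersolution at $y$ and $y\notin\mathcal A$. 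The main obstacle lies exactly here: a priori some component may have $D^-u_i(y)=\emptyset$, in which case the supersolution test at $y$ is vacuous and the argument stalls. I would resolve this by perturbing $\mathbf u$ within its strictness margin $\delta$ before comparing, modifying one component near $y$ so as to force a one-sided subtangent (a local-minimum-type behavior) at $y$ while, thanks to the margin and the continuity and convexity of the Hamiltonians, retaining strictness of all components. This one-sided differentiability adjustment, together with the simultaneous control of all components imposed by the coupling structure (A1)--(A3), is the delicate part of the proof; the remainder is bookkeeping with Proposition \ref{maxsub} and Lemma \ref{criteria}.
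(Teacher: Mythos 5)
First, a point of reference: the paper does not prove Theorem \ref{t22} at all --- it is imported from \cite{Davini} --- so your proposal must be judged against the known argument rather than a proof in this text. Your skeleton (reduce, via Proposition \ref{maxsub} and the definition of $\mathcal A$, to the supersolution test at $y$ for the maximal subsolution, then relate its failure to local strictness) is reasonable, and the pull-up producing one locally strict component is sound. The genuine gap is in the upgrade from one strict component to all of them. The sign is backwards: if you lower $u_j$ by $\eta\ge 0$ near $y$, the term $a_{kj}u_j$ in equation $k\neq j$ \emph{increases} by $|a_{kj}|\eta$ (because $a_{kj}\le 0$), so lowering $u_j$ helps equation $j$ through $a_{jj}>0$ but \emph{damages} every other equation --- and those equations have no margin to absorb the damage. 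Moreover, subtracting a bump from a component of a subsolution is not harmless even for the $j$-th equation itself: in the transition annulus where the bump has nonzero gradient, $H_j(x,Du_j-\eta D\chi)$ can exceed $H_j(x,Du_j)$ by an amount of order $\eta\|D\chi\|_\infty$, and there is no strictness there to compensate. The correct move is the opposite one: \emph{raise} the component that is already strict, with the perturbation supported inside its strictness neighborhood; the margin $\delta$ absorbs both $a_{ii}\eta$ and the gradient perturbation, while the off-diagonal signs $a_{ji}\le 0$ then give a strict gain $|a_{ji}|\eta$ to every equation $j$ with $a_{ji}<0$. Irreducibility (A3), applied to the complement of the set of already-strict indices, is exactly what lets you iterate until all components are strict; in your write-up (A3) never actually enters this step, which is a warning sign, since without irreducibility the statement is false.

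In the converse implication you correctly flag that $D^-u_i(y)$ may be empty, but the fix you sketch (perturbing to ``force a one-sided subtangent'') is not an argument, and it is also unnecessary. The paper supplies the tool for precisely this situation: Proposition \ref{testlipfct} lets you test the supersolution property of $v_i$ against the merely Lipschitz subtangent $u_i$ itself, producing some $p\in\partial u_i(y)$ with $H_i(y,p)+\sum_j a_{ij}v_j(y)\ge\beta$; since $\mathbf v(y)=\mathbf u(y)$, this contradicts Lemma \ref{criteria} when $u_i$ is locally strict at $y$. This is exactly how the paper argues in the proof of Lemma \ref{prop36}. Replacing your $D^-$-based comparison by this Clarke-gradient version closes the second direction cleanly; the first direction still needs the corrected perturbation scheme described above.
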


\medskip

\begin{theorem}\label{strictstrict} There exists a critical subsolution  to the system
which is strict $\mathbb T^N \setminus \mathcal A$.
\end{theorem}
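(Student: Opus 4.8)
The plan is to construct the required subsolution as a countable convex combination of critical subsolutions, each of which is strict only near a single point outside $\mathcal A$, exploiting that convexity of the $H_i$'s together with the linearity of the coupling turns convex combinations of subsolutions into subsolutions and propagates strictness. First I would invoke Theorem \ref{t22} in its contrapositive form: for every $y \in \mathbb{T}^N\setminus\mathcal A$ there is a critical subsolution which is locally strict at $y$, hence, by the very definition of strictness, strict in an open neighborhood $U_y$ of $y$. The family $\{U_y\}_{y\notin\mathcal A}$ is an open cover of $\mathbb{T}^N\setminus\mathcal A$; since the torus is second countable I extract a countable subcover $\{U_n\}_{n\in\N}$, with associated critical subsolutions $\mathbf u_n$ that are strict in $U_n$, say with defect $\delta_n>0$.

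Next I normalize. Adding a common constant vector $c\,\mathbf 1$ to any subsolution leaves the momenta unchanged and, by condition $(A2)$, leaves every coupling term $\sum_{j=1}^m a_{ij}u_j$ unchanged; it therefore preserves both the subsolution property and strictness. Using this freedom together with the equi-Lipschitz bound of Proposition \ref{equilip}, I would arrange the $\mathbf u_n$ to be equi-bounded, after which I set
\[ \mathbf u \;=\; \sum_{n} \lambda_n\,\mathbf u_n, \qquad \lambda_n>0,\ \sum_n \lambda_n = 1, \]
the series converging uniformly to a Lipschitz function $\mathbf u$. To see that $\mathbf u$ is a critical subsolution I would avoid differentiating the series term by term: each finite renormalized partial sum is a genuine convex combination, hence a subsolution of \eqref{criticalwcs} by convexity of the $H_i$'s and linearity of the coupling, and $\mathbf u$ is their uniform limit, so it is a subsolution by the standard stability of viscosity subsolutions under uniform convergence.

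It remains to check strictness at an arbitrary $y\in\mathbb{T}^N\setminus\mathcal A$. Choosing $n_0$ with $y\in U_{n_0}$ and writing $\lambda=\lambda_{n_0}$, I split $\mathbf u = \lambda\,\mathbf u_{n_0} + (1-\lambda)\,\mathbf w$, where $\mathbf w=\frac{1}{1-\lambda}\sum_{n\neq n_0}\lambda_n\mathbf u_n$ is again a critical subsolution (same stability argument). Now only a two-term combination is involved, so at a.e.\ $x$ one has $Du_i = \lambda\,Du_{n_0,i} + (1-\lambda)\,Dw_i$, and convexity of $H_i$ gives, for a.e.\ $x\in U_{n_0}$,
\[ H_i(x,Du_i) + \sum_{j=1}^m a_{ij}u_j \;\leq\; \lambda\big(\beta-\delta_{n_0}\big) + (1-\lambda)\,\beta \;=\; \beta-\lambda\,\delta_{n_0} \;<\; \beta, \]
which is exactly the a.e.\ condition defining local strictness of $\mathbf u$ at $y$ (with defect $\lambda\,\delta_{n_0}$). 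Since every point of $\mathbb{T}^N\setminus\mathcal A$ lies in some $U_n$, the function $\mathbf u$ is strict in $\mathbb{T}^N\setminus\mathcal A$, as required.

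The genuinely delicate point I expect is the normalization step: to make the series converge I need the family $\{\mathbf u_n\}$ equi-bounded in sup norm, not merely equi-Lipschitz. This requires a uniform bound on the oscillation \emph{across} components, which I would obtain from the structure of $A$ — using $\sum_j a_{ij}u_j = \sum_{j\neq i} a_{ij}(u_j-u_i)$ together with $a_{ij}\leq 0$, irreducibility, and the lower bound on the $H_i$ on the relevant compact momentum set — to rule out arbitrarily large gaps between components of a subsolution. By contrast, the propagation of strictness through the convex combination is the routine part of the argument.
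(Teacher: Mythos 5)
Your argument is correct and is essentially the standard one: the paper does not prove Theorem \ref{strictstrict} but imports it from \cite{Davini}, where the proof is exactly this scheme (locally strict subsolutions from the contrapositive of Theorem \ref{t22}, a countable subcover, and a countable convex combination, using convexity of the $H_i$ and linearity of the coupling to propagate both the subsolution property and the strictness defect $\lambda_{n_0}\delta_{n_0}$). The only loose end you flag, equi-boundedness of the normalized family, does not require the structural analysis of $A$ you sketch: since $\mathcal A$ is nonempty, fix $y_0\in\mathcal A$ and use the rigidity Theorem \ref{t4} to write $\mathbf u_n(y_0)-\mathbf u_1(y_0)=k_n\mathbf 1$; subtracting $k_n\mathbf 1$ (harmless by (A2)) makes all the $\mathbf u_n$ agree at $y_0$, and Proposition \ref{equilip} plus compactness of $\mathbb T^N$ then gives a uniform sup bound. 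Alternatively, you can skip equi-boundedness altogether by choosing weights with $\lambda_n\le 2^{-n}(1+\|\mathbf u_n\|_\infty)^{-1}$ (renormalized to sum to $1$), which already forces uniform convergence of the series while keeping every $\lambda_{n_0}>0$.
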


\medskip

We conclude the section recalling a final  property which  is
peculiar to systems and has no equivalent in the scalar case. There
is  a remarkable rigidity phenomenon taking place,
namely the restriction on admissible values that a critical
subsolution can attain at a given point. This becomes severe on the Aubry
set. We have

\smallskip
\begin{theorem}\label{t4}
Let $y \in {\mathcal A}$ and $\mathbf u, \mathbf v$ be two
subsolutions of (\ref{criticalwcs}), then
\begin{equation}
\mathbf u(y)-\mathbf v(y)=k\mathbf {1}, \quad k \in \mathbb{R}
\end{equation}
\end{theorem}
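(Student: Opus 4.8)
The plan is to argue by contraposition: assuming $y\in\mathcal A$, I want to show that if $\mathbf u(y)-\mathbf v(y)$ were \emph{not} a multiple of $\mathbf 1$, then one could manufacture a critical subsolution which is locally strict at $y$ in every component, contradicting Theorem~\ref{t22}. The constructions rest on three elementary stability properties of the class of subsolutions of \eqref{criticalwcs}, all immediate from the hypotheses: by (H2) any convex combination of subsolutions is a subsolution; by (A2) the translate $\mathbf v+k\mathbf 1$ is a subsolution for every $k\in\mathbb R$, since $\sum_j a_{ij}=0$; and by (A1) together with $a_{ii}>0$ the componentwise maximum $\max(\mathbf u,\mathbf v)$ is a subsolution, because on $\{u_i\ge v_i\}$ one has $\sum_j a_{ij}\max(u_j,v_j)\le\sum_j a_{ij}u_j$ (the diagonal term is unchanged while each off--diagonal term can only decrease).

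First I would normalize. Replacing $\mathbf v$ by $\mathbf v+k\mathbf 1$ with $k=\min_i\big(u_i(y)-v_i(y)\big)$ --- still a subsolution --- I may assume $\mathbf u(y)\ge\mathbf v(y)$ and that the set $I:=\{\,i:u_i(y)=v_i(y)\,\}$ is nonempty. The claim $\mathbf u(y)-\mathbf v(y)\in\mathbb R\,\mathbf 1$ is exactly the assertion $I=\{1,\dots,m\}$, so I argue by contradiction assuming $I\subsetneq\{1,\dots,m\}$. Writing $\mathbf w=\mathbf u-\mathbf v\ge 0$ (so $w_i(y)=0$ for $i\in I$ and $w_i(y)>0$ otherwise), the coupling structure gives for each $i\in I$
\[
\sum_{k}a_{ik}\,u_k(y)-\sum_{k}a_{ik}\,v_k(y)=\sum_{k}a_{ik}\,w_k(y)=\sum_{k\neq i}a_{ik}\,w_k(y)\le 0,
\]
because $a_{ik}\le 0$ and $w_k(y)\ge 0$; moreover, applying irreducibility (A3) to $W=I$ produces indices $i_0\in I$ and $j_0\notin I$ with $a_{i_0 j_0}<0$, so the inequality is \emph{strict} for $i=i_0$. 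Thus $\mathbf u$ has a strictly smaller coupling term than $\mathbf v$ in component $i_0$ at $y$, which is the seed of strictness.

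To exploit this I consider the subsolutions $\mathbf s_t:=\max(\mathbf u,\mathbf v+t\mathbf 1)$ for small $t>0$. Near $y$ the component $s_{t,i}$ coincides with $v_i+t$ when $i\in I$ and with $u_i$ when $i\notin I$ (since $w_i(y)=0$ on $I$ and $w_i(y)>0$ off $I$). Feeding these into the equations and using $\sum_k a_{ik}=0$, one checks that at $y$ the $i$-th component of $\mathbf s_t$ is strict as soon as $i$ is adjacent, through a negative entry of $A$, to the opposite side of the partition $(I,I^{c})$; by irreducibility (applied to $I$ and to $I^{c}$) this \emph{frontier} is nonempty on both sides, so $\mathbf s_t$ is a genuine critical subsolution that is locally strict at $y$ on a nonempty set $S$ of components. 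I would then upgrade partial to full strictness by iteration: given a subsolution strict at $y$ on a proper set $S$, irreducibility applied to $S^{c}$ yields $j\notin S$ and $k\in S$ with $a_{jk}<0$; raising the (strict, hence positive-margin) component $k$ slightly in a neighborhood of $y$ lowers the $j$-th equation by $|a_{jk}|$ times the increment and leaves every other component's equation non-increased --- because all off--diagonal entries are $\le 0$ --- so $j$ joins the strict set while $S$ is preserved for a small enough increment. After finitely many steps every component is strict at $y$, contradicting $y\in\mathcal A$ via Theorem~\ref{t22}.

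The delicate point --- the main obstacle --- is this last step: realizing the componentwise ``raising'' as an honest \emph{global} critical subsolution that is strict in a whole neighborhood of $y$ (as required by the definition of locally strict and by Lemma~\ref{criteria}), rather than merely pointwise. It cannot be achieved by any global constant vector shift, since the resulting change $A\mathbf c$ lies in $\mathrm{Im}(A)$ and can never have all components negative; the modification must therefore be localized around $y$, and one must control the transition region, where the perturbed gradients appear, without destroying the subsolution property. This is exactly where the monotonicity hypotheses (A1) and $a_{ii}>0$ and the irreducibility (A3) enter in an essential way, and where the bulk of the technical work lies.
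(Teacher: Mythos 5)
The paper itself gives no proof of Theorem \ref{t4}: the statement is recalled from \cite{Davini}, so there is no internal argument to compare against; I therefore assess your proof on its own merits and against the standard route. Your argument is correct, and it is in the spirit of the known one: normalize by a constant shift (legitimate by (A2)) so that $I=\{i:u_i(y)=v_i(y)\}$ is nonempty and, for contradiction, proper; form $\mathbf s_t=\max(\mathbf u,\mathbf v+t\mathbf 1)$, which is a critical subsolution by (A1)--(A2) and the a.e.\ characterization (note this needs only (A1), not $a_{ii}>0$); and use irreducibility across the partition $(I,I^c)$ to create strictness. The key computation checks out: for $i_0\in I$, $j_0\notin I$ with $a_{i_0j_0}<0$ and $0<t<\min_{j\notin I}w_j(y)$ one gets, a.e.\ near $y$,
\[
H_{i_0}(x,Dv_{i_0})+\sum_j a_{i_0j}s_{t,j}(x)\;\le\;\beta+\sum_{j\notin I}a_{i_0j}\bigl(u_j(x)-v_j(x)-t\bigr)\;\le\;\beta+a_{i_0j_0}\,c\;<\;\beta
\]
for some $c>0$, which is exactly the local strictness of Lemma \ref{criteria}. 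The step you leave unwritten --- upgrading strictness from a nonempty set $S$ of components to all of them --- does work as you describe: take $\psi$ a smooth bump equal to $1$ near $y$ and supported in the neighborhood where the $k$-th inequality has margin $\delta$, and replace $u_k$ by $u_k+\eps\psi$; for $\eps$ small, uniform continuity of $H_k$ on $\mathbb T^N\times\overline B(0,\ell_\beta+1)$ together with $a_{kk}\eps<\delta/2$ preserves the $k$-th inequality (and its strictness where $D\psi=0$), while every other component's inequality only improves since $a_{jk}\le 0$ and becomes strict near $y$ whenever $a_{jk}<0$; irreducibility applied to $S^c$ then enlarges $S$ at each of at most $m$ steps, contradicting Theorem \ref{t22}. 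Two small caveats: the argument is non-circular only because Theorem \ref{t22} is established in \cite{Davini} independently of the rigidity property (which is the case); and after your normalization $\mathbf u-\mathbf v\ge 0$ holds only at the point $y$, not globally as the phrase ``$\mathbf w=\mathbf u-\mathbf v\ge 0$'' suggests --- harmless, since you only use the inequality at $y$.
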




  \section{Scalar reduction  } \label{control}

In this section we associate to the critical  system   some
discounted scalar equations. Using these equations, we will
thereafter write an algorithm  to construct a solution of the
critical system by suitably modifying outside $\mathcal A$ a given
critical subsolution.

\smallskip
We denote by $\mathbf w=(w_{1},\cdots,w_{m})$ the initial  subsolution of
(\ref{criticalwcs}) and freeze all its components except one
obtaining  for a given $i \in \{1, \cdots, m\}$,  the following
discounted equation:
\begin{equation} \label{e9}
a_{ii}v(x) + H_{i}(x,Dv)+\sum_{j\neq i}a_{ij}w_{j}(x)-\beta=0, \quad
\hbox{in $\mathbb{T}^{N}$.}
\end{equation}
For simplicity we set $f_{i}(x)=-\sum_{j\neq i}a_{ij}w_{j}(x)+\beta,$ for every $i$.
\medskip

\medskip
The discounted equation satisfies a comparison principle.  This is
well known, however the proof in our setting  is simplified to some
extent by exploiting the compactness of the ambient space plus the
coercivity  of the Hamiltonian with respect to the momentum
variable. This straightforwardly implies that all subsolutions are
Lipschitz--continuous and allows using  Proposition
\ref{testlipfct}. We provide the argument for reader's convenience.

\smallskip

\begin{theorem}\label{t12} If $u$, $v$
 an USC continuous subsolution  and a LSC supersolution of (\ref{e9}), respectively, then $u \leq v$ in $\mathbb{T}^{N}$.
\end{theorem}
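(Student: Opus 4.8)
The plan is to prove the comparison principle by contradiction, doubling the variable in the standard viscosity-solution fashion but exploiting the special structure of the discounted equation \eqref{e9}, namely that $a_{ii}>0$ strictly (this was just established in the preceding proposition), which provides the strict monotonicity in the unknown that drives the comparison. First I would suppose, for contradiction, that $\max_{\mathbb T^N}(u-v)=:M>0$ and attempt to derive an inequality that the positivity of $a_{ii}$ rules out.

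The key step is the penalization. For $\varepsilon>0$ I would consider
\[
\Phi_\varepsilon(x,y)=u(x)-v(y)-\frac{1}{2\varepsilon}|x-y|^2
\]
on $\mathbb T^N\times\mathbb T^N$ and let $(x_\varepsilon,y_\varepsilon)$ be a maximizer. By the standard lemma on doubling variables, $|x_\varepsilon-y_\varepsilon|^2/\varepsilon\to 0$, the quantity $p_\varepsilon:=(x_\varepsilon-y_\varepsilon)/\varepsilon$ stays bounded (here the coercivity/Lipschitz bound enters, as the excerpt emphasizes that all sub/supersolutions are Lipschitz, so the momenta are controlled), and along a subsequence $x_\varepsilon,y_\varepsilon\to \bar x$ with $u(\bar x)-v(\bar x)=M$. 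At the maximizer, $x\mapsto u(x)-\tfrac1{2\varepsilon}|x-y_\varepsilon|^2$ has a maximum, so $p_\varepsilon$ is a (Lipschitz) test slope for the subsolution $u$; symmetrically $p_\varepsilon$ is a test slope for the supersolution $v$ at $y_\varepsilon$. Invoking Proposition \ref{testlipfct} (the equation $F(x,v,Dv)=0$ with $F(x,r,p)=a_{ii}r+H_i(x,p)-f_i(x)$ is continuous and nondecreasing in $r$ since $a_{ii}>0$), I obtain some $q_\varepsilon\in\partial(\tfrac1{2\varepsilon}|\cdot-y_\varepsilon|^2)(x_\varepsilon)$ and $q'_\varepsilon\in\partial(\tfrac1{2\varepsilon}|x_\varepsilon-\cdot|^2)(y_\varepsilon)$ realizing
\[
a_{ii}\,u(x_\varepsilon)+H_i(x_\varepsilon,q_\varepsilon)-f_i(x_\varepsilon)\le 0,\qquad
a_{ii}\,v(y_\varepsilon)+H_i(y_\varepsilon,q'_\varepsilon)-f_i(y_\varepsilon)\ge 0,
\]
where $q_\varepsilon=q'_\varepsilon=p_\varepsilon$ because the penalization term is smooth.

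Subtracting the two inequalities gives
\[
a_{ii}\big(u(x_\varepsilon)-v(y_\varepsilon)\big)\le H_i(y_\varepsilon,p_\varepsilon)-H_i(x_\varepsilon,p_\varepsilon)+f_i(x_\varepsilon)-f_i(y_\varepsilon).
\]
Now I would let $\varepsilon\to 0$: the left side tends to $a_{ii}M>0$ (using $u(x_\varepsilon)-v(y_\varepsilon)\to M$), whereas the right side tends to $0$ by the uniform continuity of $H_i$ on the relevant compact set $\mathbb T^N\times\overline{B(0,R)}$ and of $f_i$ (which is Lipschitz, being a finite combination of the Lipschitz components $w_j$), since $x_\varepsilon,y_\varepsilon\to\bar x$ and $p_\varepsilon$ stays bounded. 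This yields $a_{ii}M\le 0$, contradicting $a_{ii}>0$ and $M>0$, and hence $M\le 0$, i.e. $u\le v$.

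The main obstacle is making the step where $p_\varepsilon$ serves simultaneously as a legitimate test slope for both $u$ and $v$ fully rigorous in the merely-Lipschitz (not $C^1$) setting: one must justify that Proposition \ref{testlipfct} applies to the Lipschitz supertangent/subtangent $x\mapsto\mp\tfrac1{2\varepsilon}|x-y_\varepsilon|^2$ and that the generalized gradient of this smooth penalization is the singleton $\{p_\varepsilon\}$, so that the same vector $p_\varepsilon$ enters both inequalities. Once that identification is secured, the closing estimate is routine, relying only on the strict sign $a_{ii}>0$ and the equicontinuity provided by the Lipschitz bounds on subsolutions.
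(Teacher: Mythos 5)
Your proof is correct, but it follows a genuinely different route from the paper. You run the classical doubling-of-variables argument: penalize with $\frac{1}{2\varepsilon}|x-y|^2$, extract the common slope $p_\varepsilon=(x_\varepsilon-y_\varepsilon)/\varepsilon$, subtract the two viscosity inequalities, and let the strict positivity of $a_{ii}$ kill the limit $a_{ii}M\le 0$. This is sound; note only that your worry at the end is unfounded: the penalization is a smooth ($C^1$) test function, so the plain viscosity definition applies at $x_\varepsilon$ and $y_\varepsilon$ and there is no need to invoke Proposition \ref{testlipfct} at all (its generalized gradient is trivially the singleton $\{p_\varepsilon\}$); also, boundedness of $p_\varepsilon$ comes for free either from $p_\varepsilon\in D^+u(x_\varepsilon)$ with $u$ Lipschitz, or directly from coercivity of $H_i$ in the subsolution inequality. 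The paper instead gives a one-point argument with no doubling: since the coercivity forces every subsolution $u$ to be Lipschitz, at a minimum point $x_0$ of $v-u$ the function $u$ itself (up to an additive constant) is a Lipschitz subtangent to $v$, so Proposition \ref{testlipfct} yields the supersolution inequality for some $p\in\partial u(x_0)$, while the a.e./generalized-gradient characterization of subsolutions (valid by convexity of $H_i$) gives the subsolution inequality for all $p\in\partial u(x_0)$; subtracting at the single point $x_0$ gives $a_{ii}(v(x_0)-u(x_0))\ge 0$ immediately. The trade-off: the paper's proof is shorter and exploits the convex, coercive structure plus the Lipschitz-test-function machinery it has already set up, whereas your doubling argument is more robust (it does not use convexity of $H_i$, only continuity, the monotonicity $a_{ii}>0$, and compactness), at the cost of the usual penalization bookkeeping.
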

\begin{proof}
We recall that  $u$ is Lipschitz continuous. Let $x_{0}$ be a point
in $\mathbb{T}^{N}$ where $v-u$ attains its minimum and assume, for
purposes of contradiction, that
\begin{equation} \label{e29}
v(x_{0})-u(x_{0}) < 0.
\end{equation}
The function $u$ is therefore a Lipschitz continuous subtangent to
$v$ at $x_{0}$ and hence we have
$$a_{ii} v(x_{0}) +H_{i}(x_{0},p)-f_{i}(x_0) \geq 0 \quad \mbox{ for some } p \in \partial {u(x_{0})}.$$
But $u$ is a viscosity subsolution of (\ref{e9}), then
$$a_{ii} u(x_{0}) +H_{i}(x_{0},p)-f_{i}(x_0)  \leq 0 \quad \mbox{ for all } p \in \partial {u(x_{0})}.$$
Subtracting the above two inequalities, we get
$$a_{ii} (v(x_{0})-u(x_{0}))\geq 0,$$
which contradicts (\ref{e29}). We therefore conclude that
$\min_{\mathbb{T}^{N}}(v-u) \geq 0$, which in turn implies $v \geq
u$ in $\mathbb{T}^{N}$, as desired. $\hfill{\Box}$
\end{proof}

\medskip

The equation   (\ref{e9}) can be interpreted as the
Hamilton--Jacobi--Bellman equation of a control problem with the
Lagrangian $L_i$ as cost and $a_{ii}$ as discount factor. The
control is given by the velocities which are in principle unbounded, but this is somehow compensated by the coercive character
of the Hamiltonian. The corresponding value function
 $v^i:\mathbb{T}^{N} \rightarrow \mathbb{R}$  is defined  by
\begin{equation} \label{e10}
v^i(x)=\inf_{\gamma} \int_{-\infty}^{0} e^{a_{ii}s} \left(L_{i}(\gamma(s),\dot{\gamma}(s))+f_{i}(\gamma(s))\right)ds,
\end{equation}
where the infimum is taken over all absolutely continuous curves $
\gamma :]-\infty,0] \rightarrow \mathbb{T}^{N}$ with $\gamma(0)=x$.

\smallskip

We have:

\begin{theorem}\cite[Appendix 2 ]{davini2}\label{t7} The discounted value function $v^i$ is the unique continuous viscosity solution of (\ref{e9}).
 Moreover, for every $x \in \mathbb{T}^{N}$ there exists a curve $\gamma : (-\infty,0] \rightarrow \mathbb{T}^{N}$ with $\gamma(0)=x$ such that
\begin{equation} \label{e19}
v^i(x)= \int_{-\infty}^{0} e^{a_{ii}s} \left(L_{i}(\gamma(s),\dot{\gamma}(s))+f_{i}(\gamma(s))\right)ds.
\end{equation}
\end{theorem}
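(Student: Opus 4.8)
The plan is to read \eqref{e9} as the Hamilton--Jacobi--Bellman equation attached to the discounted value function $v^i$ of \eqref{e10}, run the standard dynamic-programming argument to show $v^i$ solves it, and then invoke the comparison principle of Theorem \ref{t12} to turn existence of a solution into uniqueness. First I would check that $v^i$ is well defined and continuous. Since $\mathbb{T}^N$ is compact, $f_i$ is bounded; since $L_i$ is superlinear in $q$ (and $\mathbb{T}^N$ compact), it admits a uniform superlinear lower bound, in particular $L_i \geq -C_0$. Because $a_{ii} > 0$ by the Proposition on the diagonal entries, the weight $e^{a_{ii}s}$ is integrable on $(-\infty,0]$ with $\int_{-\infty}^0 e^{a_{ii}s}\,ds = 1/a_{ii}$, so the integrand is bounded below and the infimum in \eqref{e10} is finite; an upper bound comes from testing the constant curve $\gamma\equiv x$. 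Continuity (in fact Lipschitz continuity) follows by the usual reparametrization trick: a near-optimal curve for $x$ is modified on a short initial interval to start at a nearby point $y$, at a cost controlled by $|x-y|$.

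Next I would establish the Dynamic Programming Principle: for every $t>0$,
\[ v^i(x) = \inf_\gamma\left\{\int_{-t}^0 e^{a_{ii}s}\big(L_i(\gamma,\dot\gamma)+f_i(\gamma)\big)\,ds + e^{-a_{ii}t}\,v^i(\gamma(-t))\right\}, \]
the infimum being over absolutely continuous curves on $[-t,0]$ with $\gamma(0)=x$. From the DPP I would deduce that $v^i$ is a viscosity solution of \eqref{e9}. Testing a $C^1$ supertangent at a point and using near-constant controls gives the subsolution inequality, while inserting near-optimal trajectories and letting $t\to 0$ gives the supersolution inequality; differentiating the factor $e^{-a_{ii}t}$ is precisely what produces the zeroth-order term $a_{ii}v^i$. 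Uniqueness is then immediate: any continuous solution is simultaneously a sub- and a supersolution, so Theorem \ref{t12} applied in both directions forces it to coincide with $v^i$.

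For the existence of an optimal trajectory I would use the direct method. Fix $x$ and take a minimizing sequence $\gamma_n$. The bound on the cost, together with the superlinear lower bound on $L_i$, controls $\int_{-\infty}^0 e^{a_{ii}s}\,\theta(|\dot\gamma_n(s)|)\,ds$ for a superlinear $\theta$; on each finite window $[-T,0]$ the factor $e^{a_{ii}s}\geq e^{-a_{ii}T}$ then yields a uniform bound on $\int_{-T}^0\theta(|\dot\gamma_n|)\,ds$, hence equi-absolute-continuity of the $\gamma_n$ and weak $L^1$ precompactness of their velocities. A diagonal extraction over $T=1,2,\dots$ produces a subsequence converging locally uniformly to a curve $\gamma$ with $\dot\gamma_n\rightharpoonup\dot\gamma$ in $L^1_{\mathrm{loc}}$. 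Tonelli-type lower semicontinuity, valid since $L_i$ is convex in $q$, gives
\[ \int_{-T}^0 e^{a_{ii}s}\big(L_i(\gamma,\dot\gamma)+f_i(\gamma)\big)\,ds \leq \liminf_n \int_{-T}^0 e^{a_{ii}s}\big(L_i(\gamma_n,\dot\gamma_n)+f_i(\gamma_n)\big)\,ds, \]
and combining this with the uniform smallness of the tails $\int_{-\infty}^{-T}$ (controlled by $C\,e^{-a_{ii}T}$ via $L_i\geq -C_0$ and the boundedness of $f_i$) shows, as $T\to\infty$, that $\gamma$ realizes the infimum, i.e. \eqref{e19} holds.

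The step I expect to be the main obstacle is the last one, namely the passage through the infinite horizon. The delicate point is not the lower semicontinuity on finite windows, which is routine, but making the tail estimate uniform in $n$: one must ensure that the contribution of $\int_{-\infty}^{-T}$ is negligible uniformly along the minimizing sequence, so that optimality on every finite interval can be upgraded to optimality on the whole half-line, and simultaneously that the limit curve $\gamma$ itself has a controlled tail so that its total cost is finite and attained. Both are handled by playing the exponential discount against the superlinear growth and the lower bound of $L_i$, but the uniformity in $n$ is where the care is required.
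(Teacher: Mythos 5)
The paper does not prove Theorem \ref{t7}: it is quoted directly from \cite{davini2}, Appendix 2. Your outline --- well-posedness of the value function via the exponential discount ($a_{ii}>0$) and the uniform lower bound on $L_i$, the dynamic programming principle, the viscosity property, uniqueness from the comparison principle of Theorem \ref{t12}, and existence of optimal curves by the direct method with the infinite-horizon tail controlled by playing the discount against the superlinearity --- is precisely the standard argument carried out in that reference, and I see no gaps in it.
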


\bigskip

\begin{corollary} \label{r1}   We have
\begin{equation}\label{r1-1}
   w_{i} \leq v^i \mbox{    in } \mathbb{T}^{N} \quad\hbox{and}\quad w_{i} = v^i \mbox{    in }
\mathcal A.
\end{equation}
\end{corollary}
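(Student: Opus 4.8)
The plan is to treat the two assertions separately: the inequality $w_i\le v^i$ comes from the comparison principle, while the equality on $\mathcal A$ comes from the rigidity of critical subsolutions.

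For the inequality, I would first observe that $w_i$ is itself a subsolution of the scalar discounted equation \eqref{e9}. Writing out the $i$-th inequality of the subsolution condition for $\mathbf w$ and isolating the diagonal term, one gets for a.e.\ $x$ (equivalently, for every $p\in\partial w_i(x)$)
\[ a_{ii}w_i(x)+H_i(x,Dw_i(x))\le \beta-\sum_{j\neq i}a_{ij}w_j(x)=f_i(x), \]
which is exactly the a.e.\ subsolution property of \eqref{e9}; by the equivalence of a.e.\ and viscosity subsolutions recalled above, $w_i$ is a viscosity subsolution. Since $v^i$ is the continuous solution of \eqref{e9} by Theorem \ref{t7}, hence in particular a supersolution, the comparison principle of Theorem \ref{t12} yields $w_i\le v^i$ on $\mathbb T^N$.

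For the equality on $\mathcal A$ the idea is to substitute $v^i$ for the $i$-th component of $\mathbf w$ and then invoke the rigidity Theorem \ref{t4}. Set $\mathbf w'=(w_1,\dots,w_{i-1},v^i,w_{i+1},\dots,w_m)$, which is continuous by Theorem \ref{t7}. I claim $\mathbf w'$ is again a subsolution of \eqref{criticalwcs}. The $i$-th equation holds (in fact with equality) precisely because $v^i$ solves \eqref{e9}, so that $H_i(x,p)+a_{ii}v^i(x)+\sum_{j\neq i}a_{ij}w_j(x)\le\beta$ for every $p\in D^+v^i(x)$. For an index $k\neq i$ the only modified coupling term is $a_{ki}v^i$, and here the two structural facts combine: the off-diagonal sign condition (A1) gives $a_{ki}\le 0$, while the first part gives $v^i\ge w_i$, so that $a_{ki}v^i\le a_{ki}w_i$. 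Replacing $w_i$ by $v^i$ therefore only decreases the left-hand side of the $k$-th subsolution inequality already satisfied by $\mathbf w$, and the inequality persists for every $p\in\partial w_k(x)$. Thus $\mathbf w'$ is a critical subsolution.

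Having two critical subsolutions $\mathbf w$ and $\mathbf w'$ in hand, I would fix $y\in\mathcal A$ and apply Theorem \ref{t4} to conclude $\mathbf w'(y)-\mathbf w(y)=k\mathbf 1$ for some $k\in\mathbb R$. Since $\mathbf w'$ and $\mathbf w$ coincide in every component $j\neq i$, and such an index exists because the system is genuinely coupled ($m\ge 2$), reading off any such component forces $k=0$; the $i$-th component then gives $v^i(y)=w_i(y)$, as desired. The crux of the argument, and the only place where (A1) together with the previously established inequality $v^i\ge w_i$ is genuinely used, is the verification that the one-component modification $\mathbf w'$ remains a subsolution; once this monotonicity-type step is secured, the rigidity theorem closes the proof at once.
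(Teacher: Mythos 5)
Your proposal is correct and follows essentially the same route as the paper: the inequality via the comparison principle of Theorem \ref{t12} applied to the subsolution $w_i$ and the solution $v^i$ of \eqref{e9}, and the equality on $\mathcal A$ by checking that replacing $w_i$ with $v^i$ (using (A1) and $v^i\ge w_i$) yields another critical subsolution and then invoking the rigidity Theorem \ref{t4}. You merely spell out the verification of the modified subsolution and the step $k=0$ in more detail than the paper does.
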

\begin{proof} Since  $w_{i}$ is a viscosity subsolution of (\ref{e9}), we derive
from   Theorem \ref{t12} the inequality in \eqref{r1-1}. Taking into
account that $a_{ij} \leq 0 $ for every $i \neq j$, we further
derive that the vector valued function obtained from $w$ by
replacing $w_i$ with $v^i$ and keeping all other components
unaffected, is  still a subsolution of (\ref{criticalwcs}). The
equality in \eqref{r1-1} then comes from the rigidity phenomenon in
${\mathcal A}$,  see Theorem \ref{t4}.
 $\hfill{\Box}$
\end{proof}

\medskip

\subsection{Algorithm}

In this subsection  we will construct a monotonic sequence of
critical subsolutions $(\mathbf v_{n})$ which converges, up to a
subsequence, to a solution of (\ref{criticalwcs}).

\bigskip

\noindent {step 1: Construction of the sequence} $(\mathbf v_{n})$.\\
\\
Let  $\mathbf{w}={\mathbf v_0}=(v_0^{1},v_0^{2},...,v_0^{m})$  be any subsolution of (\ref{criticalwcs}).\\
The first element $\mathbf v_{1}=(v_{1}^{1},v_{1}^{2},...,v_{1}^{m})$ of $(\mathbf v_{n})$ is defined  component by component as follows :\\ \\
 for $k= 1 \cdots m$,  $v_{1}^{k}$ is the solution of the discounted
equation
$$H_{k}(x,Du)+a_{kk}u+ \sum_{j < k} a_{kj}v_{1}^{j}(x)+\sum_{j >k}a_{kj}v_0^{j}(x)=\beta,$$
where the possible empty sums in the above formula are counted as
$0$. By construction, see the proof of Corollary \ref{r1}, $\mathbf v_{1}$ is
a critical subsolution of the system. In addition, using Corollary
\ref{r1}, we get
$$\mathbf w =\mathbf v_0 \leq \mathbf v_{1} \quad \mbox{and} \quad \mathbf  w=\mathbf v_0=\mathbf v_{1} \mbox{ on }\mathcal{A} .$$
 \\
 We iterate the above procedure  to construct  $\mathbf v_n$, for any $n \in \N$, $n >1$,  starting from $\mathbf v_{n-1}$.

\smallskip
 We get that any element $\mathbf v_n$ is a critical subsolution of the
 system and
 \begin{equation}\label{prop1}
    \mathbf v_{n-1} \leq \mathbf v_n  \qquad\hbox{for any $n$}
\end{equation}
\begin{equation}\label{prop2}
   \hbox{all the $\mathbf v_n$ coincide   on $\mathcal{A}.$}
\end{equation}

\noindent \textbf{step 2: Convergence of the sequence} $(v_{n})$. \\ \\
 We exploit Proposition \ref{equilip} to infer that the functions $(\mathbf v_{n})$ are equi-Lipschitz.
 Moreover, all the  $(\mathbf v_{n})$'s  take a fixed value on the Aubry set,  see \eqref{prop2}, so they are equibounded as well.
  We derive by Ascoli-Arzel\`{a} Theorem that  $(\mathbf v_{n})$ converge  uniformly, up to
  subsequences, and we in turn deduce  convergence of the whole sequence
  because of its monotonicity, see \eqref{prop1}. \\ \\
 \noindent \textbf{step 3: proving the limit is a critical solution }.\\ \\
 We denote by  $\mathbf V=(V_{1},V_{2},...,V_{m})$ the uniform limit of $\mathbf v_n$.\\
 Given $k \in \{1, \cdots,m \}$, we have, by construction,  that  $v_n^{k}$ is the solution of
 $$F_{n}^k(x,u,Du):=H_{k}(x,Du)+a_{kk}u+\sum_{j< k}a_{kj}v_n^{j}(x) + \sum_{j> k}a_{kj}v_{n-1}^{j}(x)=\beta.$$
The Hamiltonians $F^k_n$ converge  uniformly  in $\mathbb T^N \times
\R \times \mathbb R^N$, as $n \to+ \infty$,  to
 $$F^k(x,u,p):=H_{k}(x,p)+a_{kk}u+\sum_{j\neq k} a_{kj}V_{j}(x).$$
 Consequently,  by basic stability  properties in viscosity solutions theory,  $V_{k}$ is solution to the limit equation
 $$F^k(x,u,Du)=H_{k}(x,Du)+a_{kk}u+\sum_{j\neq k}a_{kj}V_{j}(x)=\beta.$$
  We conclude that  the limit $\mathbf V=(V_{1},V_{2},...,V_{m})$ is  solution of (\ref{criticalwcs}), as it was claimed.
\medskip

\medskip

\begin{rem}\label{important} i) As already pointed out in the Introduction,  the  above algorithm  can have  a numerical interest  to compute  critical solutions
of the system via  the analysis of a sequence of scalar discounted
equations. The latter problem has been extensively studied and well
tested  numerical codes are available. It is clearly required the
knowledge of a critical subsolution as starting point, but this is
 easier than the determination of a solution. In addition, since the
 initial subsolution is not affected on $\mathcal A$ at any step of
 the procedure, the algorithm can be useful to get an approximation
 of the Aubry set itself.
 ii) In principle  we could apply the algorithm also starting from any
supercritical subsolution. What happens is that the sequence we
construct is not any more anchored at the Aubry set, and we get in
the end a sequence of functions positively diverging at any point.
We believe that the rate of divergence could be exploited to
estimate  how far the supercritical value we have chosen is from
$\beta$, but we do not investigate any further this issue in the
present paper.
\end{rem}


  \section{Applications of the scalar reduction}\label{applica}

 True to title, we describe in this section  two applications of the scalar reduction method. Namely, we provide a characterization of the isolated points of $\mathcal A$ and
 establish  some semiconcavity properties of critical subsolutions to the system.
To do that, we need a strengthened form of Theorem \ref{t7}  for
points belonging to $\mathcal A$.

\smallskip

\medskip
\begin{proposition} \label{prop20}   Let $y \in {\mathcal A} $ and $\mathbf u$ be a
subsolution of (\ref{criticalwcs}). Then for every $i \in \{1,
\cdots,m\}$ there exists a curve $\gamma : (-\infty,0] \rightarrow
\mathbb{T}^{N}$ with $\gamma(0)=y$ such that
$$u_i(y)= \int_{-\infty}^{0} e^{a_{ii}s} \left(L_{i}(\gamma(s),\dot{\gamma}(s))-
\sum_{j\neq i}a_{ij}u_{j}(\gamma(s))+\beta\right)ds,$$
and $\gamma(t) \in {\mathcal A} $ for every $t$.

\end{proposition}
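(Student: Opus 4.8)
The integral identity is almost immediate: taking $\mathbf u$ as the frozen profile in \eqref{e9}, Corollary \ref{r1} gives $u_i=v^i$ on $\mathcal A$, so $u_i(y)=v^i(y)$, and Theorem \ref{t7} furnishes an optimal curve $\gamma$ with $\gamma(0)=y$ realizing \eqref{e19}; recalling $f_i(x)=-\sum_{j\ne i}a_{ij}u_j(x)+\beta$ this is exactly the stated representation. The whole difficulty is therefore to produce an optimal curve that in addition lies in $\mathcal A$. My plan is to first establish the statement when $\mathbf u$ is strict in $\mathbb T^N\setminus\mathcal A$, and then recover the general case by an approximation argument.

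So assume first that $\mathbf u$ is strict off $\mathcal A$, let $\gamma$ be the optimal curve above and set $\phi(t)=e^{a_{ii}t}u_i(\gamma(t))$. Splitting the integral in \eqref{e10} at a time $t<0$ and rescaling the tail by $s\mapsto s-t$, the dynamic programming principle for $v^i$ yields, along the optimal $\gamma$, the calibration identity $v^i(y)=\int_t^0 e^{a_{ii}s}\big(L_i(\gamma,\dot\gamma)+f_i(\gamma)\big)\,ds+e^{a_{ii}t}v^i(\gamma(t))$ for every $t\le 0$. On the other hand, since $u_i$ is an a.e.\ subsolution of \eqref{e9}, Lemma \ref{clarke} gives $\tfrac{d}{dt}u_i(\gamma(t))=p\cdot\dot\gamma(t)$ for some $p\in\partial u_i(\gamma(t))$, and combining the subsolution inequality $a_{ii}u_i+H_i(\gamma,p)\le f_i(\gamma)$ with Fenchel's inequality $p\cdot\dot\gamma\le L_i(\gamma,\dot\gamma)+H_i(\gamma,p)$ I obtain $\phi'(t)\le e^{a_{ii}t}\big(L_i(\gamma,\dot\gamma)+f_i(\gamma)\big)$ for a.e.\ $t$. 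Integrating, $u_i(y)\le\int_t^0 e^{a_{ii}s}(L_i+f_i)\,ds+e^{a_{ii}t}u_i(\gamma(t))$; comparing with the calibration identity and using $u_i(y)=v^i(y)$ together with $u_i\le v^i$ (Corollary \ref{r1}) forces $u_i(\gamma(t))=v^i(\gamma(t))$ for all $t$, hence equality a.e.\ in the chain above. Equality in the subsolution step means that at a.e.\ $\gamma(t)$ there is $p\in\partial u_i(\gamma(t))$ with $H_i(\gamma(t),p)+\sum_j a_{ij}u_j(\gamma(t))=\beta$, so by Lemma \ref{criteria} $u_i$ is not locally strict at $\gamma(t)$. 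As $\mathbf u$ is strict off $\mathcal A$, this forces $\gamma(t)\in\mathcal A$ for a.e.\ $t$, whence for all $t$, since $\gamma$ is continuous and $\mathcal A$ is closed.

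For a general subsolution $\mathbf u$, I would pick by Theorem \ref{strictstrict} a subsolution $\mathbf z$ strict in $\mathbb T^N\setminus\mathcal A$ and set $\mathbf u_\lambda=(1-\lambda)\mathbf u+\lambda\mathbf z$, $\lambda\in(0,1]$. By convexity of the $H_i$'s (assumption (H2)) each $\mathbf u_\lambda$ is a subsolution of \eqref{criticalwcs}, and the strict gap of $\mathbf z$ off $\mathcal A$ survives the convex combination, so $\mathbf u_\lambda$ is strict in $\mathbb T^N\setminus\mathcal A$. The previous step then produces an optimal curve $\gamma_\lambda\subset\mathcal A$ with $\gamma_\lambda(0)=y$ realizing the representation for the $i$-th component of $\mathbf u_\lambda$ at $y$. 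The superlinearity of $L_i$ together with the uniform bound on the data $f_i$ built from $\mathbf u_\lambda$ (the $\mathbf u_\lambda$ form an equibounded, equi-Lipschitz family by Proposition \ref{equilip}) gives a velocity bound independent of $\lambda$, so by Ascoli--Arzel\`a and a diagonal extraction $\gamma_\lambda\to\gamma_0$ uniformly on compact subintervals, with $\gamma_0\subset\mathcal A$ because $\mathcal A$ is closed. Finally, lower semicontinuity of the action, the uniform convergence of the costs and the discount factor controlling the tail let me pass to the limit and conclude that $\gamma_0$ realizes $v^i(y)=u_i(y)$, which is the desired representation.

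The main obstacle is precisely the confinement $\gamma\subset\mathcal A$: the calibration argument only shows that the chosen component $u_i$ ceases to be locally strict along $\gamma$, and for a generic subsolution this does not by itself locate $\gamma$ in $\mathcal A$. The device that unlocks the proof is to run the argument on a subsolution that is strict off $\mathcal A$ --- where ``not locally strict'' is equivalent to ``in $\mathcal A$'' --- and then to transfer the conclusion to an arbitrary $\mathbf u$ by the convex-combination approximation. The remaining delicate points are the $\lambda$-uniform compactness of the optimal curves and the passage to the limit in the infinite-horizon discounted action, both of which I expect to be routine given the coercivity and superlinearity of the Hamiltonians.
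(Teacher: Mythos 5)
Your proof is correct, but it takes a genuinely different route from the paper's. For the key point --- confining the optimal curve to $\mathcal A$ --- the paper uses neither a calibration identity nor an approximation. It normalizes $\mathbf u$ so that $\mathbf u(y)=\mathbf w(y)$ for a subsolution $\mathbf w$ strict off $\mathcal A$ (possible by the rigidity Theorem \ref{t4}), introduces the maximal subsolution $\bar{\mathbf u}$ attaining that value at $y$ --- a critical solution with $\mathbf w<\bar{\mathbf u}$ on $\mathbb T^N\setminus\mathcal A$ by Lemma \ref{prop36} --- and observes that if the optimal curve for the discounted equation with frozen data $\bar u_j$ left $\mathcal A$, then replacing $\bar u_j$ by $w_j$ in the integrand would strictly decrease the integral (since $a_{ij}<0$ for $j\neq i$), yielding $w_i(y)<\bar u_i(y)$ and contradicting \eqref{empoli1}. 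The transfer to an arbitrary subsolution is then a two-line sandwich, $u_i(y)\le\int(\cdots u_j\cdots)\le\int(\cdots\bar u_j\cdots)=\bar u_i(y)=u_i(y)$, using $\mathbf u\le\bar{\mathbf u}$ and $a_{ij}\le0$, which shows the \emph{same} curve is optimal for $u_i$. Your route buys a little extra in the strict case (a pointwise calibration of $u_i$ along the curve, obtained by forcing equality in the Fenchel and subsolution inequalities and then invoking Lemma \ref{criteria}), but pays for it in the general case: the convex-combination approximation needs $\lambda$-uniform Lipschitz bounds on minimizers of the discounted functional, Tonelli lower semicontinuity, and control of the discounted tail --- all standard but genuinely additional work, and all avoided by the paper's monotonicity trick. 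Both arguments ultimately rest on the same two pillars, namely the existence of a subsolution strict off $\mathcal A$ (Theorem \ref{strictstrict}) and the sign condition on the coupling, which makes the discounted value function monotone in the frozen components.
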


For the proof we need a preliminary lemma

\begin{lemma} \label{prop36} Let $\mathbf u$ be a subsolution of
(\ref{criticalwcs}) strict outside ${\mathcal A} $. Then for every
$y \in \mathcal{A}$, there exists a critical solution $\mathbf v$
such that
$$\mathbf u(y)=\mathbf v(y) \quad \mbox{ and } \quad  u < v \mbox{ on } \mathbb{T}^{N}\setminus\mathcal{A}.$$
\end{lemma}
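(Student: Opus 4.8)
The plan is to run the algorithm of Section \ref{control} taking the given subsolution $\mathbf u$ as the starting point $\mathbf v_0 = \mathbf w$, and then to set $\mathbf v := \mathbf V$, the critical solution obtained as the monotone limit of the sequence $(\mathbf v_n)$. Two of the three requirements then come for free: by \eqref{prop1} and \eqref{prop2} we have $\mathbf u = \mathbf v_0 \le \mathbf V$ on all of $\mathbb T^N$, and $\mathbf u = \mathbf V$ on $\mathcal A$. In particular $\mathbf u(y) = \mathbf V(y)$ at the prescribed $y \in \mathcal A$ (in fact at every point of $\mathcal A$). The only thing left is to upgrade the inequality $\mathbf u \le \mathbf V$ to the \emph{strict} inequality off $\mathcal A$, and this is exactly where the hypothesis that $\mathbf u$ is strict outside $\mathcal A$ must be used.

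Since $\mathbf v_1 \le \mathbf V$ by monotonicity, it suffices to establish strictness already at the first step, i.e. $u_k < v_1^k$ on $\mathbb T^N\setminus\mathcal A$ for each $k$. The preliminary observation is that $u_k$ is itself a \emph{strict} subsolution (off $\mathcal A$) of the discounted equation solved by $v_1^k$, namely $H_k(x,Du)+a_{kk}u+\sum_{j<k}a_{kj}v_1^j+\sum_{j>k}a_{kj}u_j=\beta$. Indeed, subtracting the strict subsolution inequality for $\mathbf u$ from the left-hand side of this equation leaves the discrepancy $\sum_{j<k}a_{kj}(v_1^j-u_j)$, which is $\le 0$ because $a_{kj}\le 0$ for $j\ne k$ and $v_1^j\ge u_j$ (the latter being the inequality $\mathbf v_0\le\mathbf v_1$ already recorded in the algorithm). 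Hence the defect $-\delta$ off $\mathcal A$ is inherited by $u_k$ for the $v_1^k$-equation.

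The main step is then a strict comparison along optimal trajectories. Fix $x_0\notin\mathcal A$ and let $\gamma$ be an optimal curve for $v_1^k$ at $x_0$, furnished by Theorem \ref{t7}. Differentiating $s\mapsto e^{a_{kk}s}u_k(\gamma(s))$, evaluating the derivative of $u_k\circ\gamma$ by means of Lemma \ref{clarke} against an element $p\in\partial u_k(\gamma(s))$, invoking the Fenchel inequality $p\cdot\dot\gamma(s)\le L_k(\gamma,\dot\gamma)+H_k(\gamma,p)$ and the strict subsolution inequality, one obtains after integrating over $[-T,0]$ and letting $T\to+\infty$ (the boundary term at $-T$ vanishing because $a_{kk}>0$ and $u_k$ is bounded)
\[ u_k(x_0) \le v_1^k(x_0) - \delta \int_{-\infty}^0 e^{a_{kk}s}\,\mathbbm{1}_{\{\gamma(s)\notin\mathcal A\}}\,ds. \]
Since $\gamma(0)=x_0\notin\mathcal A$, continuity of $\gamma$ forces $\gamma(s)\notin\mathcal A$ for $s$ in a left neighborhood of $0$, so the integral is strictly positive and $u_k(x_0)<v_1^k(x_0)$. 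Combining with $v_1^k\le V_k$ gives $\mathbf u<\mathbf V$ on $\mathbb T^N\setminus\mathcal A$, which completes the proof with $\mathbf v = \mathbf V$.

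The step I expect to be the most delicate is this strict comparison: one must justify the a.e. chain-rule computation for the Lipschitz function $u_k$ along $\gamma$ (precisely the content of Lemma \ref{clarke}), verify integrability and the vanishing of the boundary term at $-\infty$, and handle the fact that the strictness constant $\delta$ is only \emph{local}. The last point is harmless, since it is enough to use the defect on a fixed short time-interval near $s=0$, on which $\gamma$ remains inside a single neighborhood of $x_0$ where $u_k$ is strict.
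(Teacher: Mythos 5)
Your proof is correct, but it follows a genuinely different route from the paper's. The paper argues in three lines: take $\mathbf v$ to be the \emph{maximal} critical subsolution attaining the value $\mathbf u(y)$ at $y$ (Proposition \ref{maxsub}); since $y\in\mathcal A$ this is a solution, and $\mathbf v\geq \mathbf u$ by maximality; if $v_i(x_0)=u_i(x_0)$ at some $x_0\notin\mathcal A$, then $u_i$ is a Lipschitz subtangent to $v_i$ there, and Proposition \ref{testlipfct} together with $a_{ij}\leq 0$, $v_j\geq u_j$ yields $\beta\leq H_i(x_0,p)+\sum_j a_{ij}u_j(x_0)$ for some $p\in\partial u_i(x_0)$, contradicting Lemma \ref{criteria}. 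You instead run the algorithm of Section \ref{control} started at $\mathbf u$ and upgrade $\mathbf u\leq\mathbf v_1$ to a strict inequality off $\mathcal A$ by a quantitative comparison along the optimal curves of Theorem \ref{t7}; the reduction to the first iterate, the verification that $u_k$ is a strict subsolution of the $k$-th discounted equation (using $a_{kj}\leq 0$ and $v_1^j\geq u_j$), and the chain-rule computation via Lemma \ref{clarke} are all sound, and there is no circularity since only Sections 2--3 are invoked. Two remarks: your displayed estimate with $\mathbbm{1}_{\{\gamma(s)\notin\mathcal A\}}$ implicitly uses a defect $\delta$ uniform on all of $\mathbb T^N\setminus\mathcal A$, which need not exist since the defect can degenerate near $\mathcal A$ --- but your closing paragraph correctly repairs this by restricting to a short time interval near $s=0$; and the trajectory argument could be replaced by the same subtangency test the paper uses (if $u_k(x_0)=v_1^k(x_0)$ then $u_k$ is subtangent to $v_1^k$ at $x_0$, contradicting strict subsolubility via Proposition \ref{testlipfct}), which would shorten your proof considerably. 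What your version buys is a single solution $\mathbf V$, independent of $y$, agreeing with $\mathbf u$ on all of $\mathcal A$ and strictly dominating it elsewhere --- slightly more than the statement asks for --- together with an explicit integral lower bound on the gap $v_1^k(x_0)-u_k(x_0)$; what the paper's version buys is brevity and independence from the algorithm.
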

\begin{proof}   Given $y \in \mathcal{A}$,  we consider the maximal critical subsolution $\mathbf v$ taking the value $\mathbf u(y)$ at $y$.
Then, by the very definition of Aubry set, $\mathbf v$ is a critical
solution,  and   $\mathbf v \geq u$.  If this inequality were not
strict at some $x_{0} \in \mathbb{T}^{N}\setminus\mathcal{A}$, then
$u_i(x_0)=v_i(x_0)$ for some index $i$, and consequently  $u_{i}$
should be subtangent to $v_{i}$ at $x_{0}$, and hence by Proposition
\ref{testlipfct}
$$\beta \leq H_{i}(x_0,p)+\sum_{j=1}^{m}a_{ij}v_{j}(x_0) \leq H_{i}(x_0,p)+\sum_{j=1}^{m}a_{ij}u_{j}(x_0) $$
for some $p \in \partial u_{i}(x_{0})$. This  contradicts  $u_{i}$
being locally strict at $x_0$, in view of Lemma \ref{criteria}.

$\hfill{\Box}$
\end{proof}

\smallskip

\begin{proof} { \bf of the Proposition \ref{prop20}}   We consider a  critical subsolution  $\mathbf w$ to the system strict outside
 $\mathcal A$, see Theorem \ref{strictstrict}.
It is not restrictive, by adding a suitable constant,
 to assume $u_i(y)= w_i(y)$, where $\mathbf u$ is the subsolution appearing in the statement. This  in turn implies by the rigidity property on the Aubry set, see Theorem
 \ref{t4},  $\mathbf u(y)=\mathbf w(y)$.
 We in addition denote by $\bar u$ the
maximal subsolution taking the value $\mathbf w(y)=\mathbf u(y)$ at
$y$. It is a critical solution to the system in view of  Theorem
\ref{t22} and, according to Lemma \ref{prop36},  we also have
\begin{equation} \label{e26}
 w < \bar u \quad\mbox{ on } \mathbb{T}^{N}\setminus\mathcal{A}.
\end{equation}
Now, let $\underline {v}$ be the solution of the discounted equation
$$H_{i}(x,Dv)+a_{ii}v(x)+\sum_{j\neq i}a_{ij}w_{j}(x)=\beta,$$
and $\bar{v}$ the solution of
$$H_{i}(x,Dv)+a_{ii}v(x)+\sum_{j\neq i}a_{ij} \bar u_{j}(x)=\beta.$$
We deduce from Corollary \ref{r1}
\begin{equation}\label{empoli1}
   \bar{v}(y)=\underline{v}(y)=\bar u_i(y)=u_i(y)=w_i(y).
\end{equation}
 There exists, in force of Theorem \ref{t7}, a curve $\gamma :
(-\infty,0] \rightarrow \mathbb{T}^{N}$ with $\gamma(0)=y$ such that
$$\bar u_i(y)= \int_{-\infty}^{0} e^{a_{ii}s} \left(L_{i}(\gamma(s),\dot{\gamma}(s))-\sum_{j\neq i}a_{ij}\bar u_{j}(\gamma(s))+\beta\right)ds.$$
Assume, for purposes of contradiction, that the support of $\gamma$
is not contained in  $\mathcal{A}$ then, taking into account
(\ref{e26}),  that $a_{ij} < 0$ for $i \neq j$ by (A1) plus
irreducibility of $A$, we get
\begin{eqnarray*}
  w_i(y)&\leq& \int_{-\infty}^{0} e^{a_{ii}s} \left(L_{i}(\gamma(s),\dot{\gamma}(s))-\sum_{j\neq i}a_{ij}w_{j}(\gamma(s))+\beta\right)ds \\
   &<& \int_{-\infty}^{0} e^{a_{ii}s} \left(L_{i}(\gamma(s),\dot{\gamma}(s))-\sum_{j\neq i}a_{ij}\bar u_{j}(\gamma(s))+\beta\right)ds \\
   &=& \bar u_i(y),
\end{eqnarray*}
which is impossible in view of \eqref{empoli1}.  By the maximality
property of $\bar u$, we also have
\begin{eqnarray*}
 u_i(y)&\leq& \int_{-\infty}^{0} e^{a_{ii}s} \left(L_{i}(\gamma(s),\dot{\gamma}(s))-\sum_{j\neq i}a_{ij}u_{j}(\gamma(s))+\beta\right)ds \\
   &\leq& \int_{-\infty}^{0} e^{a_{ii}s} \left(L_{i}(\gamma(s),\dot{\gamma}(s))-\sum_{j\neq i}a_{ij}\bar u_{j}(\gamma(s))+\beta\right)ds \\
   &=& \bar u_i(y),
\end{eqnarray*}
which proves that $\gamma$ is also optimal for $u_i(y)$ and
 concludes the proof.  $\hfill{\Box}$
\end{proof}

\subsection{Equilibria}  In this section we provide a characterization of the isolated  points of $\mathcal A$. To this aim, we  introduce the notion of
equilibrium points of the weakly coupled system, which we compactly
write in the form
  \begin{equation} \label{equilibriumwcs}
\mathbf{H}(x,D{u})+Au=\beta \mathbf {1},
\end{equation}
where the Hamiltonian $\mathbf{H}:\mathbb{T}^{N} \times
\mathbb{R}^{mN}$ has the separated variable form
$$\textbf{H}(x,p_1,\cdots,p_m)=(H_1(x,p_1), \cdots, H_m(x,p_m)).$$
We consider the equilibrium distribution $\textbf{o} \in \R^m$ which
is uniquely identified by the following conditions:
\begin{itemize}
  \item[1)] $\mathbf{o}\,A=0$
  \item[2)] $\mathbf{o}\cdot \mathbf {1}=1$
\end{itemize}
 It is an immediate consequence of the condition $\mathrm{Im} \,
A\cap \mathbb{R}^{m}_{+}=\{0\}$  plus $\mathrm {dim \,Im} A=m-1$,
that all the vectors orthogonal to $\mathrm{Im}\, A$ have either
strictly positive or strictly negative components.
Consequently $\textbf{o}$ is a probability vector, i.e. all its components are nonnegative and sum up to $1$.\\
Multiplying the system (\ref{equilibriumwcs}) by $\textbf{o}$ we get
that all subsolutions $\mathbf u$ satisfy
\begin{equation}\label{equi1}
   \textbf{o} \cdot \textbf{H}(x, p_1, \cdots,p_m) \leq \beta \qquad\hbox{for any $x$, $p_i \in \partial u_i(x)$.}
\end{equation}
For $x \in \mathbb{T}^{N}$, we  set $\min\limits_{p}\textbf{H}(x,p):=\left (\min \limits_{p}H_{1}(x,p), \cdots, \min \limits_{p}H_{m}(x,p) \right  )$.\\
Then we deduce from \eqref{equi1} that
$$\textbf{o} \cdot \min_{p}\textbf{H}(x,p) \leq \beta \qquad\hbox{for any $x$}.$$
We call a point $x$ equilibrium if $$\textbf{o} \cdot \min \limits_{p}\textbf{H}(x,p)=\beta.$$
\smallskip
We see from the above definition that if $x$ is an equilibrium and
$\mathbf u$ a critical subsolution then for any $i$  and  $q  \in
\partial u_i(x)$  we have
\begin{eqnarray}
  H_i(x,q) &=& \min_p H_i(x,p)  \label{equi2}\\
  \beta &=& H_{i}(x,q)+\sum_{j=1}^{m}a_{ij} \, u_j(x) \label{equi3}
\end{eqnarray}
 This implies that any
subsolution also satisfies the supersolution property at an
equilibrium point, and we deduce from  Proposition \ref {maxsub} and
Theorem \ref{t22}  that the set of equilibria is contained in the
Aubry set.

The next proposition is a partial converse of  this fact, it
provides a characterization of the isolated points of Aubry set,
which is a generalization of the scalar case.
 \smallskip
\begin{proposition} \label{isolated}
Any isolated point of the Aubry set is an equilibrium.
\end{proposition}
\begin{proof} Let $x$ be an isolated point of Aubry set and $\mathbf u$ be
a critical subsolution of the system. Then, in view of Proposition
\ref{prop20}, there exists a curve $\gamma$ with $\gamma(0)=x$ such
that
$$u_{i}(x)= \int_{-\infty}^{0} e^{a_{ii}s} \left(L_{i}(\gamma(s),\dot{\gamma}(s))-\sum_{j\neq i}a_{ij}u_{j}(\gamma(s))+\beta \right) ds, \quad \mbox{ for every } i \in \{1, \cdots,m\}$$
and  the support of $\gamma$ is contained in  ${\mathcal A} $.
Exploiting the fact that $x$ is isolated, we get $\gamma(t)\equiv x$
for every $t$ and hence
$$u_{i}(x)= \frac{1}{a_{ii}} \,\left (L_i(x,0)-\sum_{j\neq i}a_{ij}u_{j}(x)+ \beta \right ), \quad  \mbox{ for every } i \in \{1, \cdots,m\} .$$
Then
\begin{eqnarray*}
 \nonumber A\mathbf u(x) &=& ( L_1(x,0),\cdots,L_m(x,0)) + \beta \mathbf {1} \\
  \nonumber &=& -\min\limits_{p}\textbf{H}(x,p)+ \beta \mathbf {1}.
\end{eqnarray*}
Multiplying by $\textbf{o}$ and taking into account that
$\textbf{o}$ is a probability vector orthogonal to $\mathrm{Im}(A)$,
we get
$$\textbf{o} \cdot \min \limits_{p}\textbf{H}(x,p)=\beta$$
as desired.
$\hfill{\Box}$
\end{proof}
\medskip

Assuming  the strict convexity assumption (H4),  we get a
regularity result.

\smallskip
\begin{proposition}
Under the additional assumption (H4), any critical subsolution is
strictly differentiable at every isolated point of $\mathcal{A}$.
\end{proposition}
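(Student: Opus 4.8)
The plan is to read strict differentiability directly off the Clarke generalized gradient, by combining the equilibrium characterization of isolated Aubry points with the strict convexity of the Hamiltonians. Let $x$ be an isolated point of $\mathcal A$ and $\mathbf u$ a critical subsolution. First I would invoke Proposition \ref{isolated} to conclude that $x$ is an equilibrium. By the relations \eqref{equi2} established above for equilibrium points, for every index $i$ and every $q \in \partial u_i(x)$ one has $H_i(x,q)=\min_p H_i(x,p)$. In other words, every element of the generalized gradient of the $i$-th component is a minimizer of the map $p \mapsto H_i(x,p)$.

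Next I would bring in the additional assumption (H4). For fixed $x$, the function $p \mapsto H_i(x,p)$ is strictly convex by (H4) and coercive by (H3), hence continuous and coercive on $\R^N$, so it attains its minimum at a single point $q_i^\star$. Since \eqref{equi2} forces every $q \in \partial u_i(x)$ to coincide with this unique minimizer, the set $\partial u_i(x)$ collapses to the singleton $\{q_i^\star\}$.

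Finally, I would appeal to the characterization recalled in Section \ref{notations}: a locally Lipschitz function is strictly differentiable at a point precisely when its generalized gradient there is a singleton. As $u_i$ is Lipschitz continuous by Proposition \ref{equilip} and $\partial u_i(x)=\{q_i^\star\}$, the component $u_i$ is strictly differentiable at $x$. Running this over all $i \in \{1,\dots,m\}$ yields strict differentiability of the whole map $\mathbf u$ at $x$.

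The argument is essentially a collapse of the generalized gradient, and there is no serious analytic obstacle once the equilibrium property is in hand; the heart of the matter is that strict convexity pins down a unique momentum at an equilibrium. The one point that deserves care is that \eqref{equi2} must hold for \emph{every} element of $\partial u_i(x)$, not merely for a single representative. This is guaranteed because $\mathbf o$ has strictly positive components, being orthogonal to $\mathrm{Im}\,A$, so that the scalar product $\mathbf o \cdot \mathbf H(x,\cdot)$ in \eqref{equi1} controls each coordinate separately; I would make sure to spell out this positivity when writing the full proof.
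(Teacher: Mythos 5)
Your proposal is correct and follows essentially the same route as the paper: reduce to the equilibrium property via Proposition \ref{isolated}, use the positivity of the components of $\mathbf o$ to force $H_i(x,q)=\min_p H_i(x,p)$ for \emph{every} $q\in\partial u_i(x)$, and then let strict convexity (H4) collapse the generalized gradient to a singleton, which is equivalent to strict differentiability. The only cosmetic difference is that you cite the already-stated relation \eqref{equi2} while the paper re-derives it inside the proof; your closing remark about the strict positivity of $\mathbf o$ is exactly the point that makes that derivation work.
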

\begin{proof}
Let $x_0$ be an isolated point of $\mathcal{A}$ and $\mathbf u$ be a
subsolution of (\ref{criticalwcs}). Then for every $i \in \{1,
\cdots,m\}$, we have
$$H_{i}(x_0,p_i)+\sum_{j=1}^{m}a_{ij}u_{j}(x_0)\leq \beta \quad \mbox{for every } p _i\in \partial u_{i}(x_0).$$
This implies
\[
\sum_{i=1}^{m}o_{i}H_i(x_0,p_i) \leq \beta.
\]
Taking into account that  $x_0$ is equilibrium we deduce from the
above inequality that
$$\beta=\sum_{i=1}^{m}o_{i}\min _{p}H_i(x_0,p) \leq \sum_{i=1}^{m}o_{i}H_i(x_0,p_i) \leq \beta,$$
which in turn gives that
$$H_i(x_0,p_i)=\min _{p}H_i(x_0,p), \quad \mbox{ for every } p _i\in \partial u_{i}(x_0) \mbox{, } i \in \{1, \cdots,m\}.$$
Due to  $H_i$ being strictly convex,  the above minimum is unique
and hence $\partial u_{i}(x_0)$ reduces to a singleton. This implies
strict differentiability of $\mathbf u$ at $x_0$. $\hfill{\Box}$
\end{proof}


  \subsection{Semiconcavity-type  properties for critical subsolutions}\label{estimate}

In this section we study   a family of Eikonal equations derived
from the critical system.   The main information we gather through
this approach, under the additional assumptions (H4),(H5),  is that
the superdifferential of any critical solution of
(\ref{criticalwcs})
 is nonempty at every point of the torus. The same property holds true for any  critical subsolution on the Aubry set. \\

\bigskip
We start by stating and proving a consequence of Theorem \ref{t7}.

\begin{proposition}\label{optcu} Let $\mathbf u$, $x$, $i$ be a critical solution to the system,  a point
in $\mathbb T^N$ and an index in $\{1, \cdots,m\}$, respectively.
There is a curve $\gamma$ defined in $(- \infty,0]$   such that
$\gamma(0)=x$ and
\[ \frac d{dt} u_i(\gamma(t))  =  L_i(\gamma(t),\dot\gamma(t))- \sum_{j} a_{ij} \,
u_j(\gamma(t)) +\beta   \qquad \hbox{for a.e. $t \in
(-\infty,0)$.}\]
\end{proposition}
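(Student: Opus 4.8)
The plan is to apply Theorem \ref{t7} to the scalar discounted equation obtained by freezing all components of the given critical solution $\mathbf u$ except the $i$-th, and then to upgrade the integral optimality identity into the pointwise differential identity along the optimal curve. First I would set $f_i(x) = -\sum_{j\neq i} a_{ij} u_j(x) + \beta$ and consider the discounted equation \eqref{e9} with this $f_i$. Since $\mathbf u$ is a critical solution, its $i$-th component $u_i$ is in particular a solution of \eqref{e9}; by the uniqueness part of Theorem \ref{t7}, $u_i$ coincides with the value function $v^i$. Theorem \ref{t7} then furnishes, for the given point $x$, a curve $\gamma:(-\infty,0]\to\mathbb T^N$ with $\gamma(0)=x$ realizing the infimum, i.e.
\[ u_i(x)= \int_{-\infty}^{0} e^{a_{ii}s}\bigl(L_i(\gamma(s),\dot\gamma(s))+f_i(\gamma(s))\bigr)\,ds. \]

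The key step is to propagate this optimality from the single endpoint $x$ to every point along the trajectory, using the dynamic programming principle. For any $t<0$, the tail of $\gamma$ restricted to $(-\infty,t]$ must be optimal for the value function evaluated at $\gamma(t)$; otherwise one could splice in a strictly better tail and contradict the optimality of $\gamma$ for $u_i(x)$. This gives, for every $t\le 0$,
\[ u_i(\gamma(t))= \int_{-\infty}^{t} e^{a_{ii}(s-t)}\bigl(L_i(\gamma(s),\dot\gamma(s))+f_i(\gamma(s))\bigr)\,ds. \]
I would then differentiate this identity in $t$. Writing $g(s):=L_i(\gamma(s),\dot\gamma(s))+f_i(\gamma(s))$, the right-hand side is $e^{-a_{ii}t}\int_{-\infty}^{t} e^{a_{ii}s}g(s)\,ds$, and differentiating the product (the integrand being locally integrable, so the map $t\mapsto u_i(\gamma(t))$ is absolutely continuous) yields for a.e. $t$
\[ \frac{d}{dt}u_i(\gamma(t)) = -a_{ii}\,u_i(\gamma(t)) + g(t) = -a_{ii}\,u_i(\gamma(t)) + L_i(\gamma(t),\dot\gamma(t)) + f_i(\gamma(t)). \]
Substituting back $f_i(\gamma(t)) = -\sum_{j\neq i} a_{ij}u_j(\gamma(t)) + \beta$ and collecting the diagonal term into the sum gives exactly
\[ \frac{d}{dt}u_i(\gamma(t)) = L_i(\gamma(t),\dot\gamma(t)) - \sum_{j} a_{ij}u_j(\gamma(t)) + \beta, \]
as claimed.

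The main obstacle I anticipate is the rigorous justification of the dynamic programming step on the infinite horizon and the differentiation of the integral representation: one must check that the optimal $\gamma$ has $L_i(\gamma,\dot\gamma)$ locally integrable (so that $t\mapsto u_i(\gamma(t))$ is genuinely absolutely continuous and the fundamental theorem of calculus applies), and that the restriction of $\gamma$ to each tail is indeed optimal. Local integrability follows from the finiteness of the value function together with the superlinearity of $L_i$ and the exponential weight $e^{a_{ii}s}$, with $a_{ii}>0$; the tail-optimality is the standard Bellman splicing argument. An alternative, slightly cleaner route avoiding explicit differentiation of the integral would be to invoke Lemma \ref{clarke}, which controls $\frac{d}{dt}u_i(\gamma(t))$ as $p\cdot\dot\gamma(t)$ for some $p\in\partial u_i(\gamma(t))$, and to combine it with the subsolution inequality and the Fenchel inequality $L_i(\gamma,\dot\gamma)\ge p\cdot\dot\gamma - H_i(\gamma,p)$ to squeeze the pointwise identity out of the integral optimality; I would keep this as a fallback if the direct differentiation proves delicate.
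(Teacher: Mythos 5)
Your proposal is correct, but your main route is genuinely different from the paper's; interestingly, the ``fallback'' you sketch at the end is exactly what the paper does. The paper never invokes a dynamic programming principle: it keeps only the single integral identity at $t=0$ supplied by Theorem \ref{t7}, sets $g(t)=e^{a_{ii}t}u_i(\gamma(t))$, uses Lemma \ref{clarke} to write $\frac{d}{dt}u_i(\gamma(t))=p(t)\cdot\dot\gamma(t)$ for some $p(t)\in\partial u_i(\gamma(t))$, bounds this above by $L_i(\gamma,\dot\gamma)-\sum_j a_{ij}u_j(\gamma)+\beta$ via the Fenchel inequality together with the subsolution property, and then integrates $g'$ over $(-\infty,0]$: since the integral of the upper bound equals $u_i(x)$ by optimality of $\gamma$, the pointwise inequality must be an equality almost everywhere. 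Your main route (tail-optimality of $\gamma$ plus differentiation of the resulting representation of $u_i(\gamma(t))$ for every $t$) is also sound: the local integrability of $L_i(\gamma,\dot\gamma)$ does follow from finiteness of the value, the uniform lower bound on $L_i$ over the compact torus, and $a_{ii}>0$; the splicing argument for tail-optimality is routine; and the algebra collecting the diagonal term is right. The cost is that you must supply the dynamic programming step yourself, since Theorem \ref{t7} as stated only gives existence of an optimizer and the representation at the single endpoint. The paper's route buys self-containedness from the tools already stated and needs no tail-optimality; your route buys a direct identity without the squeeze and yields, as a by-product, the representation formula for $u_i(\gamma(t))$ at every $t\le 0$, which is slightly more information.
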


\begin{proof}   Taking into account that $u_i$ is the solution of the discounted equation \eqref{e9} with $u_j$ in place of $w_j$,
we know by  Theorem \ref{t7} that  there  is an optimal curve
$\gamma$ defined in $(-\infty,0]$ with $\gamma(0)=x$ such that
\begin{equation}\label{optcu0}
   u_i(x)= \int_{-\infty}^{0} e^{a_{ii}s}
\left [ L_{i}(\gamma(s),\dot{\gamma}(s))-\sum_{j\neq i}
a_{ij}u_{j}(\gamma(s))+\beta \right ] ds.
\end{equation}
 We claim that $\gamma$
also satisfies the  statement of the proposition. We define
\[ g(t)= e^{a_{ii} t} u_i(\gamma(t))  \qquad\hbox{for $t \in (-\infty,0)$,}\]
accordingly
\begin{equation}\label{optcu1}
   \frac d{dt} \,  g(t) = a_{ii} \, e^{a_{ii} t} u_i(\gamma(t)) + e^{a_{ii}
t} \, p(t) \cdot \dot\gamma(t)
\end{equation}
 for a.e. $t$, where $p(t)$ is a
suitable element of $\partial u_i(\gamma(t))$ satisfying $\frac
d{dt} u_i(\gamma(t))=p(t) \cdot \dot\gamma(t)$ for a.e. $t$, see
Lemma \ref{clarke}. We further get taking into account that $\mathbf
u$ is a solution to the critical system
\begin{equation}\label{optcu2}
    p(t) \cdot \dot\gamma(t) \leq H_i(\gamma(t), p(t)) + L_i(\gamma(t), \dot\gamma(t)) \leq -\sum_{j}
a_{ij}u_{j}(\gamma(t))+\beta + L_i(\gamma(t), \dot\gamma(t)).
\end{equation}
We derive from  \eqref{optcu0}, \eqref{optcu1}, \eqref{optcu2}
\begin{eqnarray*}
 u_i(x) &=& \lim_{t \to - \infty} g(0)- g(t) = \int_{-\infty }^0 \frac d{dt} \, g(t) \, dt  \\
   &\leq& \int_{-\infty }^0 a_{ii} \, e^{a_{ii} t} u_i(\gamma(t)) - e^{a_{ii} t} \, \left  (\sum_{j}
a_{ij}u_{j}(\gamma(t) ) -\beta -
   L_i(\gamma(t), \dot\gamma(t)) \right  )\, dt \\
   &=& \int_{-\infty }^0 e^{ a_{ii} t} \, \left [ L_i(\gamma(t),\dot\gamma(t)) - \sum_{j \neq i} a_{ij} \,
u_j(\gamma(t)) + \beta \right ] \, dt =u_i(x).
\end{eqnarray*}
This in turn implies
\[\frac
d{dt} u_i(\gamma(t))= p(t) \cdot \dot\gamma(t) = -\sum_{j}
a_{ij}u_{j}(\gamma(t))+\beta + L_i(\gamma(t), \dot\gamma(t))
\qquad\hbox{for a.e. $t$,}\]
 as it was to be proved. $\hfill{\Box}$
\end{proof}

\medskip

In the case where the point $x$ belongs in addition to $\mathcal A$,
we get, thanks to Proposition  \ref{prop20}, a strengthened  form of
the previous assertion.

\begin{corollary} \label{coroptcu}  The statement  of Proposition \ref{optcu} holds true for  any
critical subsolution $\mathbf u$, provided   $x \in \mathcal A$. The
curve $\gamma$ is in addition contained in $\mathcal A$.
\end{corollary}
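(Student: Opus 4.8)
The plan is to repeat the proof of Proposition \ref{optcu} almost verbatim, the only modification being the source of the integral representation of $u_i(x)$. In Proposition \ref{optcu} this representation, namely \eqref{optcu0}, came from Theorem \ref{t7} and was available because $u_i$ coincided with the value function of the discounted equation \eqref{e9}, i.e.\ because $\mathbf u$ was a solution. For a mere subsolution this fails: Corollary \ref{r1} only gives the inequality $u_i\le v^i$. However, the hypothesis $x\in\mathcal A$ is exactly what restores the equality. The first step is therefore to invoke Proposition \ref{prop20}, which for the given index $i$ produces a curve $\gamma:(-\infty,0]\to\mathbb T^N$ with $\gamma(0)=x$, entirely contained in $\mathcal A$, and satisfying
\[
u_i(x)=\int_{-\infty}^{0}e^{a_{ii}s}\left(L_{i}(\gamma(s),\dot\gamma(s))-\sum_{j\neq i}a_{ij}u_{j}(\gamma(s))+\beta\right)ds .
\]
This equality plays the role of \eqref{optcu0}, and the containment $\gamma(t)\in\mathcal A$ already delivers the last assertion of the corollary.

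With this $\gamma$ fixed I would run the differential argument of Proposition \ref{optcu} unchanged. Setting $g(t)=e^{a_{ii}t}u_i(\gamma(t))$ and differentiating as in \eqref{optcu1}, Lemma \ref{clarke} provides $p(t)\in\partial u_i(\gamma(t))$ with $\frac{d}{dt}u_i(\gamma(t))=p(t)\cdot\dot\gamma(t)$ for a.e.\ $t$; this is legitimate since $u_i$ is Lipschitz and $\gamma$ is absolutely continuous. The decisive observation is that the estimate \eqref{optcu2} does not in fact use the supersolution inequality: its first step is the Fenchel--Young inequality relating $H_i$ and $L_i$, and its second step is only the subsolution property $H_i(\gamma(t),p(t))+\sum_j a_{ij}u_j(\gamma(t))\le\beta$ evaluated at $p(t)\in\partial u_i(\gamma(t))$. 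Hence \eqref{optcu2} remains valid for the subsolution $\mathbf u$.

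Finally, integrating $\frac{d}{dt}g$ over $(-\infty,0]$, using $g(0)=u_i(x)$ and $\lim_{t\to-\infty}g(t)=0$ (which holds because $u_i$ is bounded on the torus and $a_{ii}>0$), reproduces the chain of (in)equalities of Proposition \ref{optcu}, now reading $u_i(x)\le\int[\cdots]\,dt=u_i(x)$, where the closing equality is the representation furnished by Proposition \ref{prop20}. Since the two ends agree, every inequality in the chain is forced to be an equality for a.e.\ $t$, which yields
\[
\frac{d}{dt}u_i(\gamma(t))=L_i(\gamma(t),\dot\gamma(t))-\sum_j a_{ij}u_j(\gamma(t))+\beta\qquad\text{for a.e. }t,
\]
as claimed. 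I do not anticipate a genuine obstacle: the entire content is the remark that Proposition \ref{optcu} used the solution hypothesis solely to obtain \eqref{optcu0}, and that Proposition \ref{prop20} supplies precisely this representation, with a curve trapped in $\mathcal A$, for any subsolution once $x\in\mathcal A$. The only point requiring minor care is confirming that the representation curve of Proposition \ref{prop20} is the same curve along which Lemma \ref{clarke} is applied, which is automatic.
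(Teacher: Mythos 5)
Your proposal is correct and follows exactly the paper's route: the paper's own proof consists precisely in invoking Proposition \ref{prop20} to obtain the integral representation together with a curve contained in $\mathcal A$, and then repeating the argument of Proposition \ref{optcu}, whose inequality \eqref{optcu2} indeed only uses the subsolution property via Clarke gradients. Your write-up merely makes explicit the details the paper leaves implicit.
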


\begin{proof}  If $\mathbf u$ is any critical subsolution, we know from Proposition \ref{prop20} that there is an optimal curve $\gamma$ for $u_i(x)$
which is in addition  contained in $\mathcal A$. We then  prove that
$\gamma$ satisfies the assertion arguing as in Proposition
\ref{optcu}.
\end{proof}

\medskip

We recognize that the integrand appearing in the statement of
Proposition \ref{optcu} is nothing but the Lagrangian associated
through Fenchel transform to the Hamiltonian
\begin{equation} \label{e11}
H_{i}^{\mathbf u}(x,p)= H_{i}(x,p)+\sum_{j=1}^{m}a_{ij}u_{j}(x).
\end{equation}
Given a critical subsolution $\mathbf u$ to the system,   we
therefore consider the Eikonal equation
\begin{equation}\label{e14}
H_{i}^{\mathbf u}(x,Dv)=\beta \qquad\hbox{in $\mathbb T^N$,}
\end{equation}
and denote by $\sigma^{\mathbf u}_i$, $S_i^{\mathbf u}$ the
corresponding support function and intrinsic distance, respectively,
given by suitably adapting \eqref{support} and \eqref{distance}.
Since $u_i$ is a subsolution to  \eqref{e14}, it is clear that the
critical value of $H_i^{\mathbf u}$ is less than or equal to
$\beta$. We in addition have:

\smallskip

\begin{proposition}\label{optcubis} The critical value of $H^{\mathbf u}_i(x,p)$ is equal to $\beta$,
for any critical subsolution $\mathbf u$ to the system, any index $i
\in \{1,\cdots, m\}$. In addition the limit points, as $t \to -
\infty$,of any curve satisfying the statement of Proposition
\ref{optcu}/Corollary \ref{coroptcu} belong to the corresponding
Aubry set.
\end{proposition}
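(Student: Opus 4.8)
The plan is to read the calibration identity of Proposition \ref{optcu}/Corollary \ref{coroptcu} as a statement about the scalar Eikonal Hamiltonian $H_i^{\mathbf u}$. Writing $L_i^{\mathbf u}(x,q)=L_i(x,q)-\sum_{j=1}^m a_{ij}u_j(x)$ for the Lagrangian obtained by Fenchel transform of $H_i^{\mathbf u}$, the integrand in those statements is exactly $L_i^{\mathbf u}+\beta$, so the optimal curve $\gamma$ calibrates $u_i$ at level $\beta$ for \eqref{e14}: integrating the identity yields, for every $t<0$,
\[ u_i(\gamma(0))-u_i(\gamma(t))=\int_t^0\left(L_i^{\mathbf u}(\gamma(s),\dot\gamma(s))+\beta\right)\,ds. \]
Such a $\gamma$, defined on $(-\infty,0]$, is provided by Proposition \ref{optcu} when $\mathbf u$ is a critical solution, and by Corollary \ref{coroptcu} at any point of the (nonempty) Aubry set $\mathcal A$ when $\mathbf u$ is merely a critical subsolution. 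This single calibrated curve is all I need for the first assertion.

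To show the critical value $c_i$ of $H_i^{\mathbf u}$ equals $\beta$, note first that $c_i\le\beta$ because $u_i$ subsolves \eqref{e14}. Suppose, for contradiction, $c_i<\beta$ and pick a subsolution $w$ of $H_i^{\mathbf u}(x,Dw)=c_i$, which is Lipschitz hence bounded on the compact torus. The standard domination inequality for subsolutions gives $w(\gamma(0))-w(\gamma(t))\le\int_t^0(L_i^{\mathbf u}+c_i)\,ds$; subtracting this from the calibration identity for $u_i$ produces
\[ \big(u_i-w\big)(\gamma(0))-\big(u_i-w\big)(\gamma(t))\ \ge\ (\beta-c_i)\,|t|. \]
The left-hand side is bounded (both $u_i$ and $w$ are continuous on $\mathbb T^N$), while the right-hand side diverges as $t\to-\infty$, a contradiction. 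Hence $c_i=\beta$, so \eqref{e14} is genuinely the critical Eikonal equation and the objects $\sigma_i^{\mathbf u}$, $S_i^{\mathbf u}$ are the critical support function and intrinsic distance.

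For the second assertion I would use that, $c_i$ being equal to $\beta$, the calibration sharpens to $u_i(\gamma(t_2))-u_i(\gamma(t_1))=S_i^{\mathbf u}(\gamma(t_1),\gamma(t_2))$ for all $t_1<t_2\le 0$, so every sub-arc of $\gamma$ is an intrinsic geodesic. Let $z$ be an $\alpha$-limit point, $\gamma(t_n)\to z$ with $t_n\to-\infty$, chosen along a subsequence with $t_n-t_{n+1}\to\infty$. Concatenating a short intrinsic connection $z\to\gamma(t_{n+1})$, the long calibrated arc $\gamma|_{[t_{n+1},t_n]}$, and a short connection $\gamma(t_n)\to z$ produces cycles through $z$ whose intrinsic length is at most
\[ S_i^{\mathbf u}(z,\gamma(t_{n+1}))+\big(u_i(\gamma(t_n))-u_i(\gamma(t_{n+1}))\big)+S_i^{\mathbf u}(\gamma(t_n),z), \]
which tends to $0$ by continuity of $S_i^{\mathbf u}$ and of $u_i$ together with $S_i^{\mathbf u}(z,z)=0$, while their duration tends to $+\infty$. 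By the appendix characterization of the Aubry set of \eqref{e14} through cycles of vanishing intrinsic length and diverging duration, $z$ belongs to that Aubry set.

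The main obstacle is this last step: making the cycle construction land precisely in the scalar Aubry set as defined in the appendix. One must verify that $S_i^{\mathbf u}$ is Lipschitz (routine, from the coercivity/superlinearity of $H_i^{\mathbf u}$ on the compact torus) to control the short closing arcs, and—more delicately—ensure the cycles are genuinely nontrivial, i.e. that their duration is bounded below and in fact diverges, so that they witness the cycle/Peierls-barrier condition for membership in the Aubry set rather than collapsing to a constant loop. If the appendix already records the $\alpha$-limit result for scalar critical Eikonal equations, the second part reduces to checking its hypotheses; otherwise the estimate above, properly quantified, is the crux.
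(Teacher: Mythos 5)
Your argument for the first assertion is correct and in fact takes a different, cleaner route than the paper: you compare the calibrated curve against a hypothetical subsolution at a level $c_i<\beta$ and derive a linear divergence $(\beta-c_i)|t|$ of a bounded quantity. The paper instead obtains criticality as a byproduct of its analysis of the limit points (either from $\min_p H_i^{\mathbf u}(y,p)=\beta$ at a singleton limit, or from the existence of cycles of vanishing intrinsic length and Euclidean length bounded below, which is incompatible with $\beta$ being strictly supercritical). Your version isolates the criticality claim from the limit-point analysis, which is a genuine simplification of that half of the statement.

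The second assertion, however, has a real gap, and it is exactly the one you flag at the end. The Aubry set of the appendix is defined through cycles whose \emph{Euclidean length} is bounded from below by a fixed positive constant (duration is irrelevant: $\sigma_a$ is positively homogeneous in $q$, so the intrinsic length is reparametrization invariant and the cycles are normalized to $[0,1]$). Your concatenated cycles have intrinsic length tending to $0$, but nothing prevents their Euclidean length from tending to $0$ as well: this happens precisely when $\gamma$ has finite total variation near $-\infty$, so that the arcs $\gamma|_{[t_{n+1},t_n]}$ collapse onto the single limit point $z$. In that regime the cycle criterion certifies nothing (constant loops also have zero intrinsic length), and no choice of subsequence with $t_n-t_{n+1}\to\infty$ repairs this. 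The paper handles exactly this case by a separate argument: when the limit set reduces to a point $y$, it extracts a sequence $t_n\to-\infty$ along which the calibration identity holds and $\dot\gamma(t_n)\to 0$, passes to the limit to get $L_i(y,0)-\sum_j a_{ij}u_j(y)=-\beta$, i.e.\ $\min_p H_i^{\mathbf u}(y,p)=\beta$, and then invokes Proposition \ref{prop13}(i) (equilibria of the scalar critical equation belong to $\mathcal A_e$). Your cycle construction is essentially the paper's argument for the complementary case, where the limit set is not a singleton and the arcs can be chosen with Euclidean length bounded below. To complete your proof you must add the equilibrium argument for the collapsing case; as written, the second assertion is not established.
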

\begin{proof} We fix $\mathbf u$ and $i$. Let us consider $x \in \mathcal A$ and an optimal curve $\gamma$
as in the statement with $\gamma(0)=x$.  We denote by
$y$  a limit point   of $\gamma$ as $t \to - \infty$. If  the set of
such limit points reduces to $y$,  then there is a sequence $t_n \to
- \infty$ with
\[\frac{d}{dt} u_i(\gamma(t_n)) =
L_i(\gamma(t_n),\dot\gamma(t_n))- \sum_{j} a_{ij} \,
u_j(\gamma(t_n)) +\beta \quad\hbox{ and} \quad \dot \gamma (t_n) \to
0,\] therefore
\[0 = \lim_{t_n \to - \infty} \frac{d}{dt} u_i(\gamma(t_n))= \lim_{t_n \to -
\infty} L_i(\gamma(t_n),\dot\gamma(t_n))- \sum_{j} a_{ij} \,
u_j(\gamma(t_n)) +\beta.\] By continuity of $L_i$, $u_j$  we deduce
\[ L_i(y,0)- \sum_{j} a_{ij} \,
u_j(y)= -\beta\] or equivalently    $\min \limits_p H^u_i(y,p)=
\beta$. Since we know that $\beta$ is supercritical for
$H_i^{\mathbf u}$, this implies that $\beta$ is actually the
critical value of $H^{\mathbf u}_i$ and $y$ belongs to the
corresponding Aubry set, by Proposition \ref{prop13}.

If instead the limit set of $\gamma$, as $t \to - \infty$, is not a
singleton, then we find $\gamma(t_n)$ converging to $y$ such that
the curves $\gamma_n:=\gamma \big |_{[t_n,t_{n+1}]}$ possess
Euclidean length bounded from below by a positive constant. We have
\begin{eqnarray*}
  \int_{t_n}^{t_{n+1}} \sigma^{\mathbf u}_i(\gamma(s),\dot\gamma(s)) \,ds &\leq& \int_{t_n}^{t_{n+1}} \left [ L_i(\gamma(s),\dot\gamma(s))-
\sum_{j} a_{ij} \, u_j(\gamma(s)) +\beta \right ] \, ds \\
   &=& u_i(\gamma(t_{n+1})) - u_i(\gamma(t_n)).
\end{eqnarray*}
We deduce  from Proposition \ref{dire} that the leftmost inequality
in the above formula must actually be an equality.  This  shows that
the intrinsic length $\int_{t_n}^{t_{n+1}} \sigma^{\mathbf
u}_i(\gamma(s),\dot\gamma(s)) \,ds$  is infinitesimal as $n \to +
\infty$.

 We  construct a sequence of cycles $\eta_n$ based on
$y$ by concatenating  the segment linking $y$ to $\gamma (t_n)$,
$\gamma_n$ and the segment linking $\gamma(t_{n+1})$ to $y$. We find
that the intrinsic  lengths of such cycles are infinitesimal, as $n
\to + \infty$, while the Euclidean lengths stay bounded from below
by a positive constant.  Taking into account the very definition of
Aubry set for scalar Eikonal equations, we derive also in this case
that $\beta$ is the critical value of $H^u_i$, and $y$ belongs to
the corresponding Aubry set. This concludes the proof.
$\hfill{\Box}$
\end{proof}

\bigskip

\noindent We denote by $\mathcal A_{i}^{\mathbf u}$  the Aubry set
associated with $H_{i}^{\mathbf u}$ at the critical level $\beta$,
for $i \in \{1,\cdots,m\}$.

\smallskip

\begin{proposition} \label{prop6} We have that
\begin{equation}\label{prop6-1}
 \mathcal A_i^{ \mathbf u} \cap \mathcal A \neq \emptyset \qquad\hbox{for any
susbsolution $\mathbf u$ to \eqref{criticalwcs}, \,any $i$.}
\end{equation}
 If, in addition,
$\mathbf u$ is  strict on $\mathbb{T}^{N} \setminus \mathcal A$,
then
\begin{equation}\label{prop6-2}
  \mathcal A_{i}^{\mathbf u} \subseteq \mathcal A \quad \mbox{ for every } i \in
\{1,\cdots,m\}
\end{equation}
\end{proposition}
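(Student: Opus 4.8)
The plan is to establish the two assertions separately, both leaning on the analysis already developed in Proposition \ref{optcubis}. For the nonemptiness statement \eqref{prop6-1}, I would exploit that $\mathcal A$ is nonempty and closed, as recalled in Section \ref{notations}. Fix any $x \in \mathcal A$ and apply Corollary \ref{coroptcu} to the subsolution $\mathbf u$: this yields an optimal curve $\gamma$ with $\gamma(0)=x$ which is entirely contained in $\mathcal A$. By Proposition \ref{optcubis}, every limit point $y$ of $\gamma$ as $t \to -\infty$ belongs to $\mathcal A_i^{\mathbf u}$. Since $\gamma$ lies in the closed set $\mathcal A$, any such $y$ belongs to $\mathcal A$ as well, whence $y \in \mathcal A_i^{\mathbf u} \cap \mathcal A$ and the intersection is nonempty.

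For the inclusion \eqref{prop6-2}, I would argue by contraposition, invoking the scalar Aubry set theory recalled in the appendix, whose content is that a scalar critical equation admits no locally strict subsolution at any point of its own Aubry set. Assume $\mathbf u$ is strict on $\mathbb T^N \setminus \mathcal A$ and let $y \in \mathcal A_i^{\mathbf u}$; suppose, for contradiction, that $y \notin \mathcal A$. By strictness there are a neighborhood $U$ of $y$ and $\delta>0$ with $H_i(x,Du_i(x))+\sum_{j} a_{ij} u_j(x) \le \beta-\delta$ for a.e. $x \in U$. Recalling the definition $H_i^{\mathbf u}(x,p)=H_i(x,p)+\sum_{j} a_{ij}u_j(x)$ from \eqref{e11}, this is exactly the statement that $u_i$ is a locally strict subsolution of the Eikonal equation \eqref{e14} at $y$. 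The scalar characterization then forces $y \notin \mathcal A_i^{\mathbf u}$, a contradiction; hence $\mathcal A_i^{\mathbf u} \subseteq \mathcal A$.

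I expect the delicate point to be this last step, namely the clean transfer of strictness from the system to the associated scalar Eikonal equation and the correct appeal to the scalar characterization of the Aubry set. Once one observes that freezing the off--diagonal components $u_j$ turns the $i$-th row of the system into precisely \eqref{e14}, the system notion of local strictness for $u_i$ coincides verbatim with scalar local strictness for $H_i^{\mathbf u}$, and the scalar result applies without modification. The remaining verifications, in particular the measurability and neighborhood bookkeeping, are routine.
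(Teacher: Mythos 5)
Your proof is correct and follows essentially the same route as the paper: \eqref{prop6-1} via Corollary \ref{coroptcu} and Proposition \ref{optcubis} (limit points of an optimal curve contained in the closed set $\mathcal A$ land in $\mathcal A_i^{\mathbf u}\cap\mathcal A$), and \eqref{prop6-2} by contraposition, transferring local strictness of $u_i$ at $y\notin\mathcal A$ to the scalar equation \eqref{e14} and invoking the scalar characterization of the Aubry set. No gaps.
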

\begin{proof}
Formula \eqref{prop6-1} is a direct consequence of Corollary
\ref{coroptcu}  and Proposition \ref{optcubis}.  To show
\eqref{prop6-2}, let us consider  $y \notin \mathcal A $, then
$u_{i}$ is locally strict at $y$ for every $i \in \{1,\cdots,m\}$.
Hence, there exists an open neighborhood $W$ of $y$ and $\delta > 0
$ such that
$$H_{i}(x,Du_{i}(x))+\sum_{j=1}^{m}a_{ij}u_{j}(x) < -\delta+\beta \quad \mbox{ for a.e. } x \in W,  \quad \mbox{ for every } i \in \{1,\cdots,m\}.$$
Therefore, $u_{i}$ is a critical subsolution of (\ref{e14}) which is
locally  strict at $y$ and consequently $y \notin \mathcal
A_{i}^{\mathbf u}$.
\end{proof}
$\hfill{\Box}$

\bigskip

As already announced,    we assume (H4), (H5) to establish  the
final result. Note that, due to the Lipschitz character of any
subsolution to the system, the Hamiltonians $H^u_i$  are locally
Lipschitz--continuous in $\mathbb T^N \times \R^N$, for any
subsolution $\mathbf u$ of \eqref{criticalwcs}, any index $i$.
\smallskip

In this setting we obtain:

\begin{theorem}\label{prop8}
We assume(H4), (H5). If $\mathbf u$ is a critical subsolution of
(\ref{criticalwcs}), then
$$D^{+}u_{i}(x) \neq \emptyset \qquad \mbox{ for every } i \in \{1,2,...,m\}, \, x \in \mathcal A.$$
If, in addition, $\mathbf u$ is a solution to (\ref{criticalwcs})
then the above property holds true for any $x \in \mathbb T^N$.
\end{theorem}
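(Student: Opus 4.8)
The plan is to transfer the semiconcavity theory available for scalar Eikonal equations to each component $u_i$ by means of the reduced Hamiltonian $H_i^{\mathbf u}$ introduced in \eqref{e11}, exploiting the backward calibrated curves supplied by Corollary \ref{coroptcu} and Proposition \ref{optcu}. Denote by $L_i^{\mathbf u}(x,q)=L_i(x,q)-\sum_j a_{ij}u_j(x)$ the Lagrangian obtained from $H_i^{\mathbf u}$ through the Fenchel transform, so that the integrand of Proposition \ref{optcu} is exactly $L_i^{\mathbf u}(\gamma(s),\dot\gamma(s))+\beta$. First I would record the domination inequality: since $u_i$ is a subsolution of the Eikonal equation \eqref{e14}, Lemma \ref{clarke} together with the Fenchel inequality yields, for every absolutely continuous curve $\xi$ and every $a<b$,
\[ u_i(\xi(b))-u_i(\xi(a))\leq \int_a^b \big(L_i^{\mathbf u}(\xi(s),\dot\xi(s))+\beta\big)\,ds,\]
while the calibrated curve $\gamma$ realizes equality on every subinterval.

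Next, I would fix $x$ and $i$ and take the calibrated curve $\gamma:(-\infty,0]\to\mathbb T^N$ with $\gamma(0)=x$ furnished by Corollary \ref{coroptcu} when $x\in\mathcal A$ (respectively by Proposition \ref{optcu} for every $x\in\mathbb T^N$ when $\mathbf u$ is a solution). Fixing any $\tau>0$ and writing $z_0=\gamma(-\tau)$, I introduce the fixed-time value function
\[ \phi(y)=u_i(z_0)+\inf_{\xi}\int_{-\tau}^{0}\big(L_i^{\mathbf u}(\xi(s),\dot\xi(s))+\beta\big)\,ds,\]
the infimum being over absolutely continuous curves $\xi$ with $\xi(-\tau)=z_0$ and $\xi(0)=y$. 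The domination inequality gives $u_i\leq\phi$ in a neighbourhood of $x$, and the calibration of $\gamma$ on $[-\tau,0]$ gives $u_i(x)=\phi(x)$; hence $\phi$ is a supertangent to $u_i$ at $x$. Since any $C^1$ supertangent to $\phi$ at $x$ is then also supertangent to $u_i$, we have $D^{+}\phi(x)\subseteq D^{+}u_i(x)$, so it suffices to prove $D^{+}\phi(x)\neq\emptyset$.

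The crux is then the semiconcavity of $\phi$ near $x$. This is precisely the classical local semiconcavity of the value function of a Tonelli calculus-of-variations problem over a fixed positive time horizon, with fixed initial and free terminal endpoint; it is here that the strict convexity (H4) and the local Lipschitz regularity (H5) of $H_i$ — equivalently of the Tonelli Lagrangian $L_i^{\mathbf u}$ — enter, and it is the only nonroutine point of the argument, being the place where the scalar appendix results are invoked. Granting it, a semiconcave function has nonempty superdifferential at every interior point, so $D^{+}\phi(x)\neq\emptyset$ and therefore $D^{+}u_i(x)\neq\emptyset$.

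Running this construction for each index $i$ proves the first assertion for every $x\in\mathcal A$. For the second assertion, since Proposition \ref{optcu} supplies a calibrated curve at \emph{every} $x\in\mathbb T^N$ when $\mathbf u$ is a solution, the very same argument yields $D^{+}u_i(x)\neq\emptyset$ for all $x\in\mathbb T^N$ and all $i$, which is the claim.
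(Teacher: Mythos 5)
Your construction of the supertangent $\phi$ is sound: the domination inequality together with the calibration of $\gamma$ on $[-\tau,0]$ does make the fixed--time value function $\phi$ a supertangent to $u_i$ at $x$, so the problem correctly reduces to showing $D^{+}\phi(x)\neq\emptyset$. The gap sits exactly at the step you single out as the crux. The ``classical local semiconcavity of the value function of a Tonelli calculus--of--variations problem'' is a theorem about Lagrangians that are at least $C^{1,1}$ (typically $C^2$) in the state variable: the quadratic upper bound on second differences of the value function is obtained by perturbing the optimal trajectory and using a linear--modulus semiconcavity estimate for $L$ in $x$. Here the available regularity is only (H5), i.e.\ $H_i$ locally Lipschitz, and in addition the effective Lagrangian $L_i^{\mathbf u}(x,q)=L_i(x,q)-\sum_j a_{ij}u_j(x)$ contains the potential $-\sum_j a_{ij}u_j$, which is merely Lipschitz because the components $u_j$ of a critical subsolution are only Lipschitz. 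Under these hypotheses $\phi$ has no reason to be semiconcave, and even the weaker claim $D^{+}\phi(x)\neq\emptyset$ is not ``classical'': the usual dual--arc argument requires the Euler--Lagrange equation, hence differentiability of the Lagrangian in $x$. Nor do the appendix results cover your situation: Proposition \ref{siusa} concerns the intrinsic critical distance $x\mapsto S_c(y,x)$, not a fixed--time action functional, and it asserts only nonemptiness of the superdifferential at points $x\neq y$ --- precisely the delicate low--regularity result of \cite{siconolfi} that the paper is designed to exploit.

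This is why the paper takes a different route. It first disposes of the case $x_0\in\mathcal A_i^{\mathbf u}$ by the strict differentiability of scalar critical subsolutions on the scalar Aubry set (Proposition \ref{prop13}, item (iii)). For $x_0\in\mathcal A\setminus\mathcal A_i^{\mathbf u}$ it uses the calibrated curve only to obtain, via Proposition \ref{optcubis} and Proposition \ref{maxsubclosed}, the representation $u_i(x_0)=u_i(y_0)+S_i^{\mathbf u}(y_0,x_0)$ for some $y_0\in\mathcal A_i^{\mathbf u}$ with $y_0\neq x_0$; then $u_i(y_0)+S_i^{\mathbf u}(y_0,\cdot)$ is a supertangent to $u_i$ at $x_0$ to which Proposition \ref{siusa} applies directly. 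To salvage your fixed--time approach you would have to prove an analogue of Proposition \ref{siusa} for the finite--horizon value function under (H4)--(H5), a nontrivial result not contained in the paper; otherwise you should replace $\phi$ by the intrinsic--distance supertangent as above.
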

\begin{proof}
First assume $\mathbf u$ to be subsolution of (\ref{criticalwcs}).
If $x_0 \in \mathcal A^{\mathbf u}_i$  then  $u_i$ is differentiable
at $x_0$, according to Proposition \ref{prop13}. This proves the
assertion. If instead $x_0 \in  \mathcal A \setminus \mathcal
A^{\mathbf u}_i$, then we derive from the proof of Proposition
\ref{optcubis} that
\begin{equation}\label{prop8-1}
   u_{i}(x_0) \geq  \min \limits_{y \in \mathcal A_{i}^{\mathbf u}}\{u_{i}(y)+S_i^{\mathbf u}(y,x_0) \} .
\end{equation}
By Proposition \ref{maxsubclosed}, the function on the right
hand--side of the above formula
 is the maximal subsolution to \eqref{e14} with trace
$ u_i$ on $\mathcal A_i^{\mathbf u}$, this  implies that equality
must prevail in \eqref{prop8-1}.
 There is  then an element $y_0
\in \mathcal A_{i}^{\mathbf u}$ such that \[u_{i}(x_{0})=
u_{i}(y_0)+S_{i}^{\mathbf u}(y_0,x_{0}).\] Hence
$u_{i}(y_0)+S_{i}^{\mathbf u}(y_0,.)$ is supertangent to $u_{i}$ at
$x_{0}$, and so by Proposition \ref{siusa} the superdifferential of
$u_i$ is nonempty at $x_0$, as it was claimed.  If $\mathbf u$ is in
addition solution of (\ref{criticalwcs}), the same argument of above
gives that $D^+u_i(x_0) \neq \emptyset$ at any $x_0\in \mathbb T^N$.
This concludes the proof. $\hfill{\Box}$
\end{proof}

\begin{appendix}

  \section{Scalar Eikonal equations} \label{eikonal}
  In this appendix  we consider a continuous Hamiltonian $H:\mathbb{T}^{N}\times\mathbb{R}^{N}\rightarrow\mathbb{R}$
  satisfying the following assumptions:
  \begin{itemize}
 \item[(E1)] $p\mapsto H(x,p) \quad \mbox{is convex for every } x \in \mathbb{T}^{N};$
  \item[(E2)]$p\mapsto H(x,p) \quad \mbox{is superlinear for every } x \in \mathbb{T}^{N}. $
 \end{itemize}

 For some results we need the following additional  conditions:
\begin{itemize}
 \item[(E3)] $p\mapsto H(x,p) \quad \mbox{is strictly convex for every } x \in \mathbb{T}^{N};$
  \item[(E4)]$(x,p) \mapsto H(x,p) \quad \mbox{is locally Lipschitz--continuous in  } \mathbb{T}^{N} \times \R^N. $
 \end{itemize}

 \smallskip

 We consider the family of Hamilton-Jacobi equations
\begin{equation} \label{e12}
H(x,Du)=a \quad \mbox{in }  \mathbb{T}^{N},
\end{equation}
where $a$ is a real parameter.  We introduce an intrinsic
semidistance $S_{c}$ on the torus, related to \eqref{e12},  via
\begin{equation} \label{distance}
S_{a}(x,y)=\inf_\xi \left \{\int_{0}^{1} \sigma_a
(\xi(s),\dot{\xi}(s))ds \right  \},
\end{equation}
where  $\xi$ varies in the family of absolutely continuous curve
taking the value $x$, $y$ at $s=0$ and $s=1$, respectively,  and
$\sigma_a(x,q)$ is the support function of the $a$-sublevel of $H$,
namely
\begin{equation}\label{support}
   \sigma_a(x,q):=\max\{p \cdot q: H(x,p)\leq a\}.
\end{equation}
The following inequality holds true
\begin{equation}\label{eiko1}
    \sigma_a(x,q) \leq L(x,q)+a \qquad \hbox{for any $x \in \mathbb
    T^N$, $q \in \R^N$,}
\end{equation}
where $L$ is the Lagrangian associated to $H$ via the Fenchel
transform thanks to (E2).

\smallskip
\begin{proposition}\label{dire}  A function $u$ is subsolution to \eqref{e12}  if and
only if
\[u (x)-u(y) \leq S_{a}(y,x) \quad \mbox{ for every }\; x,y \in \mathbb T^N.\]

\end{proposition}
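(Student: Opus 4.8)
The plan is to prove both implications of the characterization of subsolutions to the Eikonal equation \eqref{e12} via the intrinsic semidistance $S_a$. The key tool throughout will be the inequality \eqref{eiko1}, which relates the support function $\sigma_a$ to the Lagrangian $L$, together with the fact (recorded just before the statement) that $u_i$ is a subsolution exactly when the Hamiltonian inequality holds along the generalized gradient, so that $u$ is Lipschitz continuous by coercivity and differentiable almost everywhere by Rademacher.

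\medskip

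\textbf{The forward implication.} Suppose $u$ is a subsolution of \eqref{e12}. First I would fix any absolutely continuous curve $\xi:[0,1]\to\mathbb T^N$ joining $y$ to $x$. Since $u$ is Lipschitz and $\xi$ is absolutely continuous, the composition $t\mapsto u(\xi(t))$ is absolutely continuous, and by Lemma \ref{clarke} there is for a.e. $s$ an element $p(s)\in\partial u(\xi(s))$ with $\frac{d}{ds}u(\xi(s))=p(s)\cdot\dot\xi(s)$. Because $u$ is a subsolution in the generalized-gradient sense we have $H(\xi(s),p(s))\le a$, so $p(s)$ lies in the $a$-sublevel and hence $p(s)\cdot\dot\xi(s)\le\sigma_a(\xi(s),\dot\xi(s))$ by the very definition \eqref{support} of the support function. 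Integrating from $0$ to $1$ yields
\[
u(x)-u(y)=\int_0^1 p(s)\cdot\dot\xi(s)\,ds\le\int_0^1\sigma_a(\xi(s),\dot\xi(s))\,ds.
\]
Taking the infimum over all such curves $\xi$ gives $u(x)-u(y)\le S_a(y,x)$, as required.

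\medskip

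\textbf{The reverse implication.} Conversely, suppose the distance inequality $u(x)-u(y)\le S_a(y,x)$ holds for all $x,y$. I would verify the subsolution property at an arbitrary point $x_0$ where $u$ is differentiable; by the a.e.-equivalence of viscosity and a.e. subsolutions under convexity this suffices. Fix a direction $q\in\R^N$ and test against the short straight segment $\xi(s)=x_0+sq$, defined for small $s>0$, running from $x_0$ to $x_0+sq$. The hypothesis gives $u(x_0+sq)-u(x_0)\le S_a(x_0,x_0+sq)\le s\,\sigma_a(x_0,q)+o(s)$, where the last estimate uses the continuity of $\sigma_a$ along the segment. Dividing by $s$ and letting $s\to0^+$ produces $Du(x_0)\cdot q\le\sigma_a(x_0,q)$ for every $q$. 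Since $\sigma_a(x_0,\cdot)$ is the support function of the closed convex $a$-sublevel of $H(x_0,\cdot)$, this inequality for all $q$ forces $Du(x_0)$ into that sublevel, i.e. $H(x_0,Du(x_0))\le a$. This is precisely the subsolution inequality.

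\medskip

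The main obstacle I anticipate lies in the reverse direction, in making the estimate $S_a(x_0,x_0+sq)\le s\,\sigma_a(x_0,q)+o(s)$ rigorous: one must control the semidistance along short curves by the frozen support function at $x_0$, which requires the continuity of $(x,q)\mapsto\sigma_a(x,q)$ and a uniform bound on the sublevels near $x_0$, the latter coming from coercivity (E2). A technical subtlety is that $\sigma_a$ may fail to be finite or continuous if the $a$-sublevel is empty, so one should first dispatch the trivial or degenerate cases and work where the sublevel is a genuine compact convex set. The forward direction is comparatively routine once the chain rule of Lemma \ref{clarke} is invoked, the only care being the measurable selection of $p(s)\in\partial u(\xi(s))$, which that lemma already supplies.
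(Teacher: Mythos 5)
Your argument is correct and is the standard one for this classical characterization; note that the paper itself gives no proof of Proposition \ref{dire}, recalling it as a known fact from \cite{siconolfi}, so there is nothing to compare against except the literature. Both implications are sound: the forward one via Lemma \ref{clarke} together with the inclusion of every $p\in\partial u(\xi(s))$ in the $a$-sublevel (valid since viscosity and generalized-gradient subsolutions coincide for convex $H$), the reverse one via short segments and the fact that a closed convex set is recovered from its support function. Two small points are worth tightening. First, at the start of the reverse implication $u$ is not yet known to be a subsolution, so its Lipschitz continuity --- needed both for Rademacher and for the a.e.-to-viscosity upgrade --- cannot be invoked ``by coercivity'' as for subsolutions; it must instead be extracted from the distance inequality itself, which is immediate since superlinearity gives a uniform bound $\sigma_a(x,q)\leq C|q|$ on the compact torus and hence $S_a(y,x)\leq C\,d(y,x)$. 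Second, the estimate $S_a(x_0,x_0+sq)\leq s\,\sigma_a(x_0,q)+o(s)$ only requires \emph{upper} semicontinuity of $x\mapsto\sigma_a(x,q)$, which follows directly from continuity of $H$ and compactness of the sublevels, so the continuity worry you flag resolves more easily than you suggest; the genuinely degenerate case of an empty sublevel makes both sides of the equivalence fail simultaneously and can indeed be dispatched at the outset as you propose.
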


\medskip

It is well known that there exists a unique value of $a$,  denoted
by $c$ and called critical, for which the equation (\ref{e12}) has a
solution on the whole torus. It is defined as
$$c=\inf\{a: \mbox{ }(\ref{e12}) \mbox{ has a subsolution}\}.$$ We then focus on  the critical equation
\begin{equation} \label{e27}
H(x,Du)=c.
\end{equation}
In what follows we will recall some important facts and results
about \eqref{e27}, taken from \cite{siconolfi}, that will play an
important role for the semi-concavity  results of Section
\ref{estimate}.

\smallskip

\begin{proposition}\label{siusa} Under the additional assumptions (E3),(E4) the
function
\[ x \mapsto S_c(y,x)\]
possess nonempty superdifferential  at $x$ for any $y$ and  any $x
\neq y$.
\end{proposition}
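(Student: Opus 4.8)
The plan is to show that the function $x \mapsto S_c(y,x)$ admits a nonempty superdifferential at every $x \neq y$ by exhibiting an explicit $C^1$ supertangent, built from an optimal trajectory for the intrinsic distance. First I would use the fact that, under superlinearity (E2), the infimum defining $S_c(y,x)$ in \eqref{distance} is attained by some Lipschitz curve $\xi$ joining $y$ to $x$; this is the standard existence of minimizing geodesics for the Finsler-type length induced by the support function $\sigma_c$. The strict convexity (E3) and Lipschitz regularity (E4) are exactly what make this minimizer well-behaved: (E3) forces the support function $\sigma_c(x,\cdot)$ to be smooth (differentiable) away from the origin, which is the geometric counterpart of strict convexity of $H$, while (E4) gives the regularity of $\sigma_c$ in the $x$-variable needed to propagate smoothness along the trajectory.

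The key structural input is the triangle-type inequality $S_c(y,z) \leq S_c(y,x) + S_c(x,z)$, valid because $S_c$ is a semidistance. Reading this as a function of the endpoint, it says that $z \mapsto S_c(y,z)$ is dominated near $x$ by $z \mapsto S_c(y,x) + S_c(x,z)$, with equality at $z=x$. Hence any supertangent to $S_c(x,\cdot)$ at $x$ (shifted up by the constant $S_c(y,x)$) is a supertangent to $S_c(y,\cdot)$ at $x$. This localizes the problem: it suffices to produce a $C^1$ supertangent to $z \mapsto S_c(x,z)$ at $z=x$ itself. The natural candidate is the function $z \mapsto \sigma_c^*$-type quantity measured along the initial direction of an optimal curve leaving $x$; concretely one takes the momentum $p$ conjugate to the initial velocity $\dot\xi(0^+)$ of the minimizer, which under (E3),(E4) lies on the boundary $\{H(x,p)=c\}$, and checks that $z \mapsto p\cdot(z-x)$ sits above $S_c(x,\cdot)$ locally. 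This would place $p$ in $D^+ S_c(y,x)$.

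The main obstacle I anticipate is the regularity analysis of the optimal trajectory near the endpoint and the verification that the conjugate momentum $p$ genuinely yields a one-sided linear upper bound with the correct slope. This requires a short-time expansion of $S_c(x,z)$ along directions near $\dot\xi(0^+)$, using the inequality \eqref{eiko1} relating $\sigma_c$ to $L+c$ together with the duality $\sigma_c(x,q) = p\cdot q$ attained precisely at the conjugate $p$ with $H(x,p)=c$; strict convexity guarantees this $p$ is unique, so the candidate superdifferential is a genuine singleton rather than a set of competing slopes. Since all of this is classical weak KAM material for strictly convex, Lipschitz Hamiltonians, I would cite \cite{siconolfi} for the precise statements and merely indicate how the optimal-curve construction supplies the supertangent, rather than redoing the geodesic regularity theory from scratch.
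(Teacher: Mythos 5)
First, a point of reference: the paper does not prove Proposition \ref{siusa} at all; it is stated in the appendix as a result recalled from \cite{siconolfi}, so there is no internal proof to compare against. Your proposal, however, contains a genuine flaw in its central reduction. The triangle inequality $S_c(y,z)\leq S_c(y,x)+S_c(x,z)$ does localize the problem, but it localizes it to showing that $z\mapsto S_c(x,z)$ has a nonempty superdifferential \emph{at its own base point} $z=x$, and that statement is false in general. To first order near its base point the intrinsic distance behaves like the positively homogeneous gauge $z\mapsto\sigma_c\bigl(x,z-x\bigr)$, i.e.\ the support function of the sublevel set $K=\{p:H(x,p)\leq c\}$ evaluated at $z-x$; such a function has a supertangent at the vertex of its cone only when $K$ is a singleton, which fails at every non-equilibrium point. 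This is precisely why the hypothesis $x\neq y$ appears in the statement. Your candidate supertangent confirms the problem: any fixed $p$ with $H(x,p)\leq c$ satisfies $p\cdot q\leq\sigma_c(x,q)$ for all $q$ by the very definition \eqref{support}, so $z\mapsto p\cdot(z-x)$ lies \emph{below} $S_c(x,\cdot)$ near $x$ rather than above it; it produces an element of $D^-S_c(x,\cdot)(x)$, not of $D^+$.

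The correct mechanism runs in the opposite direction: one must exploit the optimal curve $\xi$ for $S_c(y,x)$ as it \emph{arrives} at $x$ from $y$, i.e.\ the behaviour of $S_c(y,\cdot)$ behind $x$ along $\xi$. Distance-type functions are semiconcave away from their source because a minimizer reaching $x$ can be cut at an intermediate point $\xi(1-\eps)$ and compared with nearby endpoints; the momentum one wants is the terminal momentum $p$ realizing $\sigma_c(x,\dot\xi(1^-))=p\cdot\dot\xi(1^-)$ with $H(x,p)=c$, and (E3)--(E4) enter to make this terminal momentum well defined and to yield the one-sided linear upper bound. Making this rigorous is exactly the content of the results in \cite{siconolfi} (see also \cite{cannarsa}); it is not recoverable from the triangle inequality pointed forward from $x$. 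So the citation at the end of your proposal is the right move, but the argument you sketch before it would not prove the statement.
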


\bigskip

In the analysis of the behavior of critical subsolutions, a special
role is played by the so--called Aubry set, denoted by  $\mathcal
A_{e}$,   defined as the collection of points $y \in \mathbb{T}^{N}$
such that
$$\inf_\xi \left \{\int_{0}^{1} \sigma_c (\xi(s),\dot{\xi}(s))ds  \right \}=0,$$
where $\xi$ varies in the class of absolutely continuous cycles
based on $y$ with Euclidean length uniformly estimated from below by
some positive constant. It is not difficult to see that the
definition is independent of such constant. In the next statement we
recall some relevant properties of $\mathcal A_{e}$.

\smallskip

\begin{proposition} \label{prop13} \hfill
\begin{itemize}
\item[(i)] Any point $y$ with $\min \limits_p H(y,p) = c$ belongs to
$\mathcal A_e$;
\item[(ii)] A point $y \notin \mathcal A_{e}$ if and only if there exists a critical
subsolution which is locally strict at $y$;
\item[(iii)] Under the additional assumptions (E3),(E4),  any subsolution of
\eqref{e27} is strictly differentiable at every point of $\mathcal
A_e$.
\end{itemize}
\end{proposition}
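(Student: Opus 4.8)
The plan is to establish the central equivalence (ii) first, then read off (i) as an immediate corollary, and finally prove the regularity statement (iii). Throughout I would freely use Proposition \ref{dire}, which identifies critical subsolutions of \eqref{e27} with the functions satisfying $u(x)-u(y)\le S_c(y,x)$, the pointwise inequality \eqref{eiko1}, and the fact (valid by superlinearity) that every subsolution is Lipschitz, so that $\partial u(y)\neq\emptyset$ everywhere.

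For the easy half of (ii), suppose a critical subsolution $u$ is locally strict at $y$, i.e.\ $H(x,Du(x))\le c-\eta$ a.e.\ in a neighbourhood $U$ of $y$; equivalently $u$ solves the subsolution inequality for the level $c-\eta$ there. I would first note that, by convexity and superlinearity, the $(c-\eta)$-sublevels sit strictly inside the $c$-sublevels with a uniform gap, so there is $\lambda>0$ with $\sigma_{c-\eta}(x,q)\le\sigma_c(x,q)-\lambda|q|$ on $U$. Integrating $\tfrac{d}{ds}u(\xi(s))\le\sigma_{c-\eta}(\xi,\dot\xi)$ along any cycle $\xi$ contained in $U$, whose left side integrates to $0$, yields $\int_0^1\sigma_c(\xi,\dot\xi)\,ds\ge\lambda\times(\text{Euclidean length})$, so the cyclic infimum is strictly positive and $y\notin\mathcal A_e$ (localising the competing cycles is legitimate by the stated independence of the defining constant). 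The reverse implication, that $y\notin\mathcal A_e$ produces a critical subsolution strict at $y$, is the genuine obstacle: I would take the candidate $v(x)=S_c(y,x)$, which is a critical subsolution by Proposition \ref{dire}, and exploit that $y\notin\mathcal A_e$ forces the cyclic action around $y$ to be bounded below by some $\kappa>0$; this positivity is exactly what upgrades $v$ to a \emph{strict} subsolution near $y$, after an averaging of $S_c(\cdot,\cdot)$ over a small ball of base points to cure the singularity at $y$. This is the Fathi–Siconolfi construction recorded in \cite{siconolfi}, and it is where all the work sits.

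Statement (i) then drops out: if $y$ satisfied $\min_pH(y,p)=c$ but $y\notin\mathcal A_e$, the hard half of (ii) would supply a critical subsolution $u$ strict at $y$, hence $H(y,p)\le c-\eta$ for every $p\in\partial u(y)$ (using that a.e.\ and generalised-gradient subsolutions agree by convexity). Since $\partial u(y)\neq\emptyset$, this gives $\min_pH(y,p)\le c-\eta<c$, a contradiction; so every such $y$ lies in $\mathcal A_e$.

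For (iii), under (E3),(E4) I would reduce strict differentiability to the claim that the Clarke gradient $\partial u(y)$ is a singleton, as recalled in Section \ref{notations}. At $y\in\mathcal A_e$ there is a two-sided curve $\xi:\R\to\mathbb T^N$ with $\xi(0)=y$ that is calibrated for every critical subsolution, namely $u(\xi(t_2))-u(\xi(t_1))=\int_{t_1}^{t_2}\big(L(\xi,\dot\xi)+c\big)\,ds$ for all $t_1\le t_2$ (a two-sided minimizer of the critical action through the Aubry point, obtained by extracting a limit of the almost-closed cycles defining $\mathcal A_e$). Differentiating with Lemma \ref{clarke} gives, for a.e.\ $s$, $p(s)\cdot\dot\xi(s)=L(\xi,\dot\xi)+c$ with $p(s)\in\partial u(\xi(s))$, while Young's inequality and the subsolution property give $p(s)\cdot\dot\xi(s)\le L(\xi,\dot\xi)+H(\xi,p(s))\le L(\xi,\dot\xi)+c$. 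Equality throughout forces $H(\xi(s),p(s))=c$ together with the Fenchel equality, so by strict convexity (E3) (and (E4), which makes the Fenchel duality a homeomorphism) $p(s)$ is the unique momentum dual to $\dot\xi(s)$; reading this at $s=0$ along both time-directions pins $D^+u(y)$ and $D^-u(y)$ to the single $p_0$ with $H(y,p_0)=c$ and $\dot\xi(0)=D_pH(y,p_0)$. Hence $\partial u(y)=\{p_0\}$ and $u$ is strictly differentiable at $y$. Beyond the hard half of (ii), the main difficulty here is the existence of the two-sided calibrated curve and the fact that every critical subsolution calibrates along it, which I would take from the Weak KAM analysis of \cite{siconolfi}.
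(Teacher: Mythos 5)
The paper does not actually prove Proposition \ref{prop13}: it appears in the appendix as a list of known facts about scalar critical equations, explicitly ``taken from \cite{siconolfi}'', so there is no internal proof to compare yours against. Judged on its own terms, your sketch follows the standard route of that reference, and the part you carry out in full --- deducing (i) from the hard half of (ii), since a critical subsolution strict at $y$ would give $H(y,p)\le c-\eta$ for all $p$ in the nonempty set $\partial u(y)$ (the a.e.\ and generalized-gradient formulations agreeing by convexity), hence $\min_p H(y,p)<c$ --- is correct and is indeed how equilibria are placed in $\mathcal A_e$. Outsourcing the construction of a subsolution strict off the Aubry set, and the existence of calibrated curves, to \cite{siconolfi} is no worse than what the paper itself does.

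Two steps are glossed in a way that would not survive being written out. First, in the easy half of (ii), the infimum defining $\mathcal A_e$ runs over \emph{all} cycles based at $y$ with Euclidean length bounded below, including those that leave the neighbourhood $U$ where $u$ is strict; the independence of the defining constant lets you shrink the length threshold but does not confine competitors to $U$. The standard repair is to split a cycle at its first exit point $z$ from a ball $B(y,r)\subset U$: the inner portion contributes at least $u(z)-u(y)+\lambda r$ (integrating $\frac{d}{ds}u(\xi)\le\sigma_c(\xi,\dot\xi)-\lambda|\dot\xi|$ over a leg of length at least $r$), while the remainder contributes at least $u(y)-u(z)$ by Proposition \ref{dire}, so every competitor costs at least $\lambda r>0$. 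Second, in (iii), the calibration identity controls $p(s)\in\partial u(\xi(s))$ only for a.e.\ $s$, and even granting convergence along the curve this argument pins down at most $D^{+}u(y)$ and $D^{-}u(y)$ (via one-sided difference quotients along $\xi$, Fenchel equality and (E3)); strict differentiability means the full Clarke gradient $\partial u(y)$ is a singleton, i.e.\ \emph{every} limit of $Du(x_n)$ with $x_n\to y$ from arbitrary directions equals $p_0$, and this does not follow from the behaviour of $u$ along one curve --- it is precisely the delicate point of \cite{siconolfi}. As a map of where the content of the proposition lives your proposal is accurate; as a self-contained proof it is not.
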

\smallskip

We  finally record:
\begin{proposition}\label{maxsubclosed}
Given   a continuous function  $u_0$   defined on a  closed set $C
\subset \mathcal A_e$ such that
\[u_{0}(x)-u_{0}(y) \leq S_{c}(y,x) \quad \mbox{ for every }\; x,y \in C ,\]
 then the function
\[u:= \min \limits_{y \in C} \{u_{0}(y)+S_c(y,.) \} \]
is the maximal subsolution of (\ref{e27}) in $\mathbb{T}^{N}$
agreeing with  $u_{0}$ on $C$, and is a solution  as well.
\end{proposition}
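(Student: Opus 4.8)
The plan is to check, in order, that the candidate $u$ is well defined, that it is a subsolution of \eqref{e27}, that its trace on $C$ is $u_0$, that it is maximal among subsolutions with this trace, and finally that it is a solution. The final point is the only one where the hypothesis $C\subset\mathcal A_e$ is genuinely used.

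Well-definedness and the first three properties I would dispatch via Proposition \ref{dire}. Since $C$ is a closed subset of the compact torus it is compact, and $y\mapsto u_0(y)+S_c(y,x)$ is continuous (each $S_c(y,\cdot)$ is Lipschitz with a uniform constant), so the minimum is attained and $u$ is finite and Lipschitz. For each fixed $y$, the triangle inequality $S_c(y,x)\le S_c(y,x')+S_c(x',x)$ shows that $x\mapsto u_0(y)+S_c(y,x)$ is a subsolution in the sense of Proposition \ref{dire}; moreover the defining inequality is stable under pointwise infima, since for every $y$ one has $u(x)\le u_0(y)+S_c(y,x)\le \big(u_0(y)+S_c(y,x')\big)+S_c(x',x)$, and taking the infimum over $y$ gives $u(x)-u(x')\le S_c(x',x)$, so $u$ is a subsolution. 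For the trace, fix $x\in C$: choosing $y=x$ and using $S_c(x,x)=0$ gives $u(x)\le u_0(x)$, while the compatibility hypothesis $u_0(x)-u_0(y)\le S_c(y,x)$ rearranges to $u_0(y)+S_c(y,x)\ge u_0(x)$ for every $y\in C$, whence $u(x)\ge u_0(x)$; thus $u=u_0$ on $C$. Maximality is then immediate: if $v$ is any subsolution with $v=u_0$ on $C$, Proposition \ref{dire} yields $v(x)\le v(y)+S_c(y,x)=u_0(y)+S_c(y,x)$ for all $y\in C$, and taking the infimum over $y$ gives $v\le u$.

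The heart of the argument is the supersolution property, which I would prove pointwise using the representation as a minimum together with a supertangent-inheritance principle. Fix $x_0\in\mathbb{T}^{N}$ and choose a minimizer $y_0\in C$, so that $g:=u_0(y_0)+S_c(y_0,\cdot)$ satisfies $g\ge u$ everywhere with $g(x_0)=u(x_0)$; that is, $g$ is supertangent to $u$ at $x_0$. Consequently any $C^1$ subtangent $\phi$ to $u$ at $x_0$ satisfies $\phi\le u\le g$ near $x_0$ with $\phi(x_0)=g(x_0)$, hence is a subtangent to $g$ as well, so the supersolution inequality for $u$ at $x_0$ follows as soon as $g$, equivalently $S_c(y_0,\cdot)$, is a supersolution at $x_0$. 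If $x_0\ne y_0$ this is the standard fact that the intrinsic distance $S_c(y_0,\cdot)$ solves \eqref{e27} away from its base point. If instead $x_0=y_0$, then necessarily $x_0\in C\subset\mathcal A_e$, and one invokes that for a point $y$ of the Aubry set the function $S_c(y,\cdot)$ is a critical solution on the whole torus, in particular a supersolution at $y$ itself. Combining the two cases, $u$ is a supersolution at every point, and therefore a solution.

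I expect the case $x_0=y_0$ to be the main obstacle. When $x_0\notin C$ every minimizer differs from $x_0$ and the elementary ``solution off the source'' property of $S_c(y_0,\cdot)$ suffices; the difficulty is concentrated entirely in the supersolution property of $S_c(y,\cdot)$ at its own base point $y$, which is precisely the characterizing feature of the Aubry set and is the place where $C\subset\mathcal A_e$ cannot be weakened. Both of the critical-equation facts used here, namely that $S_c(y,\cdot)$ solves \eqref{e27} off $y$ and solves it globally when $y\in\mathcal A_e$, are the standard properties recalled from \cite{siconolfi}, and are closely tied to Proposition \ref{prop13}(ii), which identifies $\mathcal A_e$ as the set carrying no locally strict subsolution.
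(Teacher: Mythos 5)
Your proof is correct. The paper itself offers no proof of this proposition --- it is stated in the appendix as a recalled result from \cite{siconolfi} --- and your argument (subsolution property, trace, and maximality via Proposition \ref{dire} and the triangle inequality; supersolution property by passing a $C^1$ subtangent of $u$ at $x_0$ to the supertangent $u_0(y_0)+S_c(y_0,\cdot)$ given by a minimizer $y_0$, splitting into the cases $x_0\neq y_0$ and $x_0=y_0\in\mathcal A_e$) is exactly the standard one found there, with the hypothesis $C\subset\mathcal A_e$ used precisely where you say it is.
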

\end{appendix}




\begin{thebibliography}{20}

\bibitem{capuzzo}
\textsc{M. Bardi and I. Capuzzo Dolcetta},
\textit{Optimal control theory and viscosity solutions of Hamilton-Jacobi-Bellman equations},
Birkh\"{a}user, Boston (1997), doi: 10.1007/978-0-8176-4755-1.


\bibitem{Barles}
\textsc{G. Barles},
\textit{Solutions de viscosit\'e des \'equations de Hamilton-Jacobi}, Math\'ematiques \& Applications (Berlin) 17, Springer-Verlag, Paris, 1994.

\bibitem{camilli}
\textsc{F. Camilli, O. Ley, and P. Loreti},
\textit{Homogenization of monotone systems of Hamilton-
Jacobi equations}, ESAIM Control Optim. Calc. Var., 16 (2010), no. 1,  pp. 58--76, doi: 10.1051/cocv:2008061.


\bibitem{cannarsa}
\textsc{P.M  Cannarsa and C. Sinestrari},
\textit{Semiconcave functions, Hamilton-Jacobi equations and optimal control}, Prog. Nonlinear Differential Equations and Their Applications 58 (2004),
Birkh\"{a}user, Boston, doi: 10.1007/b138356 .

\bibitem{davini2}
\textsc{A. Davini, A. Fathi, R. Iturriaga, and M. Zavidovique},
\textit{Convergence of the solutions of the discounted equation},  Inventiones mathematicae, 206 (2016), no.1, pp. 29--55, doi: 10.1007/s00222-016-0648-6.

\bibitem{DSZ}
\textsc{A. Davini, A. Siconolfi, and M. Zavidovique}, \textit{Random Lax--Oleinik semigroups for Hamilton--Jacobi systems }, arXiv:1608.01836.


\bibitem{Davini}
\textsc{A. Davini and M. Zavidovique}, \textit{Aubry sets for weakly
coupled systems of Hamilton-Jacobi equations}, SIAM Journal on
Mathematical Analysis, 46 (2014), no. 5, pp.  3361--3389, doi:
10.1137/120899960.


\bibitem{fathi}
\textsc{A. Fathi},
\textit{Weak KAM Theorem in Lagrangian Dynamics}, www.crm.sns.it/media/person/1235/fathi.pdf.s.



 \bibitem{siconolfi}
\textsc{A. Fathi and A. Siconolfi}, \textit{PDE aspects of
Aubry-Mather theory for quasiconvex Hamiltonians}, Calc, Var.
Partial Differential Equations, 22 (2005), no. 2, pp.185--228, doi:
10.1007/s00526-004-0271-z.

\bibitem{ISZ}
\textsc{H.Ibrahim, A. Siconolfi and S. Zabad}, \textit{Cycle Chacterization of the Aubry Set for Weakly Coupled Hamilton-Jacobi Systems}, arXiv: 1604.08012.

\bibitem{renato}
\textsc{R. Iturriaga and H\'{E}CTOR S\'{A}NCHEZ-MORGADO},
\textit{Limit of the infinite horizon discounted Hamilton-Jacobi equation},
Discrete and Continuous Dynamical Systems, 15 (2011), no. 3, pp. 623--635, doi: 10.3934/dcdsb.2011.15.623.

\bibitem{MSTY}
\textsc{H. Mitake. A. Siconolfi, H.V. Tran and  Yamada  }, \textit{A
Lagrangian approach to weakly coupled Hamilton-Jacobi systems}, SIAM
J. MATH. ANAL. 48 (2016), no. 2, pp. 821--846, doi:10.1137/15M1010841.


\bibitem{mitake}
\textsc{H. Mitake and H. V. Tran},
\textit{Remarks on the large time behavior of viscosity solutions of
quasi-monotone weakly coupled systems of Hamilton-Jacobi equations},
Asymptot. Anal., 77(2012), no. 1-2,  pp. 43--70, doi: 10.3233/ASY-2011-1071.


\end{thebibliography}
\end{document}